\newtheorem{remark}{Remark}
\newtheorem{theorem}{Theorem}[section]
\newtheorem{lemma}[theorem]{Lemma}
\newtheorem{definition}[theorem]{Definition}
\newtheorem{proposition}[theorem]{Proposition}
\newtheorem{corollary}[theorem]{Corollary}
\newtheorem{problem}[theorem]{Problem}
\newtheorem{example}[theorem]{Example}
\journal{arXiv}
\begin{document}
	\begin{frontmatter}
		
		\title{Orbits and self-twuality in set systems and delta-matroids}

		\author{Zhuo Li$^{1}$, \ \ Xian'an Jin$^{1}$, Qi Yan$^{2}$\footnote{Corresponding author.}\\
			\small $^{1}$School of Mathematical Sciences, Xiamen University, P. R. China\\
			\small $^{2}$School of Mathematics and Statistics, Lanzhou University, P. R. China\\
			\small Email: lzhuo@stu.xmu.edu.cn, xajin@xmu.edu.cn, yanq@lzu.edu.cn}
		
		\begin{abstract}
			
			We introduce a new group action on set systems, constructed as a semidirect product of a permutation group and a group generated by twist and loop complementation operations on a single element. This action extends the ribbon group framework of Abrams and Ellis-Monaghan from ribbon graphs to set systems, facilitating a systematic investigation of self-twuality. We prove that different forms of self-twuality propagate through orbits under the group action and establish a characterization of the orbit of a vf-safe delta-matroid via multimatroids. As an application, we analyze orbits of ribbon-graphic delta-matroids. Our work answers a question posed by Abrams and Ellis-Monaghan and provides a unified algebraic framework for studying self-twuality in combinatorial structures.
		\end{abstract}
		
		\begin{keyword}
			delta-matroid, multimatroid, orbit, group action, ribbon graph
		\end{keyword}
	\end{frontmatter}

	\section{Introduction}
	Delta-matroids serve as a natural generalization of embedded graphs, much as matroids abstract and extend key structural properties of graphs. This analogy between graph theory and matroid theory finds a parallel in the relationship between embedded graphs and delta-matroids, and a significant portion of research on delta-matroids originates from generalizations of results about embedded graphs. For instance, Chun, Moffatt, Noble, and Rueckriemen \cite{Chun2019} showed that delta-matroids arise as the natural extension of graphic matroids to the setting of embedded graphs, establishing that fundamental ribbon graph operations and concepts admit natural counterparts in the delta-matroid setting. For additional related work, we refer readers to \cite{Bouchet1989}, \cite{Chun2019F}, \cite{Moffatt2017}, and \cite{Yan2024}.
	
	Wilson \cite{Wilson1979} showed that geometric duality and Petrie duality, being non-commuting involutions, generate an action of the symmetric group \(S_3\) on regular maps. Subsequently, geometric and Petrie dualities were refined to operate on individual edges of embedded graphs, beginning with Chmutov's partial duality \cite{Chmutov2019}, and followed by the twisted duals introduced in \cite{Ellis2012}, which apply all six operations of the Wilson group to individual edges.
	
	Recently, Abrams and Ellis-Monaghan \cite{Abrams2022} defined a new ribbon group action on ribbon graphs using a semidirect product of a permutation group and the original ribbon group of Ellis-Monaghan and Moffatt. They established that the orbits under this group action provide a natural framework for analyzing self-twuality, thus posing a fundamental question:
	
	\begin{problem}
		There are recently developed frameworks extending twuality operations of partial duality and partial Petrie duals to delta-matroids, and the ribbon group action lifted to this setting would open investigation into various forms of self-twuality for delta-matroids.
	\end{problem}
	
	In fact, partial duality and twists as well as partial Petrie duals and loop complementations are compatible \cite{Chun2019F}. We consider the operations of twist and loop complementation on a single element of a set system. These two operations generate a group \( \mathcal{G} \) isomorphic to the symmetric group \( S_3 \). We define a group action of \( \mathcal{G}^n \rtimes_\phi S_n \) on set systems with ground set \( [n] \), where the semidirect product is taken with respect to a homomorphism \( \phi: S_n \to \operatorname{Aut}(\mathcal{G}^n) \). A set system \( D \) is said to be \emph{self-twuality} via \( \pi \) if there exists \( (\hat{g}, \pi) \in \mathcal{G}^n \rtimes_\phi S_n \) such that \( (\hat{g}, \pi) D = D \).
	
	We introduce various forms of set system twuality and present an algebraic framework for determining these self-twualities. We then show that the twuality properties of a set system propagate through its orbits, and thus can be studied via the orbits under this group action. Hence, we intend to investigate the orbit of a delta-matroid under this group action. Notably, Ellis-Monaghan and Moffatt \cite{Ellis2012} established an elegant result for embedded graphs:
	\[
	\operatorname{Orb}_\iota(G) = \{ H \mid H_m \cong G_m \},
	\]
	where \( G_m \) denotes the medial graph of an embedded graph \( G \). However, since delta-matroids are not closed under the group action \( \mathcal{G} \), we restrict our attention to \emph{vf-safe} delta-matroids, which form a class closed under this action. For a vf-safe delta-matroid \( D \), we characterize its orbit in terms of a multimatroid construction as follows:
	\[
	\operatorname{Orb}(D) = \left\{ D' \mid \exists \tau' \in \mathbb{T}_{(U,\Omega)},\ \sigma' \in \mathbb{P}_{(U,\Omega)} \text{ such that } Z_{D', \tau', \sigma'} = Z_{D, \tau, \sigma} \right\},
	\]
	where \( \mathbb{T}_{(U,\Omega)} \) and \( \mathbb{P}_{(U,\Omega)} \) denote the sets of all transversal 3-tuples and projections of \( (U, \Omega) \), respectively.
	
	The structure of this paper is as follows. Section~\ref{2} presents the basic theory of delta-matroids, twists, and loop complementations. In Section~\ref{3}, we introduce our main algebraic framework, extending the algebraic theory of ribbon graphs developed in \cite{Abrams2022}.
	Within this framework, the study of self-twuality corresponds to analyzing stabilizers under the group action. In Section~\ref{4}, we establish that different forms of self-twuality propagate along orbits under the group action, a conclusion presented in Theorems~\ref{near conjugate} and \ref{Orb}. Section~\ref{5} provides a characterization of the orbits of vf-safe delta-matroids via multimatroids. Finally, Section~\ref{6} applies a degenerate case of Theorem~\ref{main} to investigate the orbits of ribbon-graphic delta-matroids.
	
	\section{Delta-matroids and fundamental operations}\label{2}
	
	In this section, we review several definitions related to delta-matroids and refer the reader to \cite{Bouchet1987} and \cite{Chun2019} for further details.
	
	A \emph{set system} is a pair \( D = (E, \mathcal{F}) \), where \( E \) is a finite set, called the \emph{ground set}, and \( \mathcal{F} \) is a collection of subsets of \( E \), called \emph{feasible sets}. In this paper, it will be convenient to identify \( E \) with the set \( [n] = \{1, 2, \dots, n\} \). A set system \( D \) is said to be \emph{proper} if \( \mathcal{F} \neq \emptyset \), and \emph{normal} if the empty set is feasible. For notational convenience, we omit braces for singletons \( \{i\} \). Bouchet \cite{Bouchet1987} introduced delta-matroids as follows.
	
	\begin{definition}
		\normalfont
		A \emph{delta-matroid} is a proper set system \( D = ([n], \mathcal{F}) \) that satisfies the \emph{Symmetric Exchange Axiom}:
		for any \( X, Y \in \mathcal{F} \) and any \( u \in X \triangle Y \), there exists a \( v \in X \triangle Y \) (possibly \( v = u \)) such that \( X \triangle \{u, v\} \in \mathcal{F} \). Here \( X \triangle Y = (X \cup Y) \setminus (X \cap Y) \) denotes the symmetric difference of sets.
	\end{definition}
	
	We consider two operations on set systems: twisting and loop complementation.
	Twisting was introduced by Bouchet~\cite{Bouchet1987}, and loop complementation was introduced by Brijder and Hoogeboom~\cite{Brijder2011}.
	
	For a set system \( D = ([n], \mathcal{F}) \) and any \( I \subseteq [n] \), let
	\[
	\mathcal{F} \ast I = \{ X \triangle I : X \in \mathcal{F} \},
	\]
	and define the \emph{twist} of \( D \) with respect to \( I \), denoted by \( D \ast I \), as \( ([n], \mathcal{F} \ast I) \).
	It is straightforward to show that the twist of a delta-matroid is also a delta-matroid.
	In particular, \( D \ast [n] \), written as \( D^{\ast} \), is called the \emph{dual} of \( D \).

	Let \( D = ([n], \mathcal{F}) \) be a set system and \( i \in [n] \).
	The \emph{loop complementation} of \( D \) on \( i \), denoted by \( D + i \), is defined as \( ([n], \mathcal{F}') \), where
	\[
	\mathcal{F}' = \mathcal{F} \triangle \{ I \cup \{i\} \mid I \in \mathcal{F} \text{ and } i \notin I \}.
	\]
	Brijder and Hoogeboom~\cite{Brijder2011} showed that for \( I \subseteq [n] \), \( D + I = ([n], \mathcal{F}') \), where \( X \in \mathcal{F}' \) if and only if
	\[
	\left| \{ Y \in \mathcal{F} \mid X \setminus I \subseteq Y \subseteq X \} \right|
	\]
	is odd. This implies that the loop complementation operation is independent of the sequence. Hence, for example, \( D + i + j = D + j + i = D + \{i, j\} \).
	
	Note that the class of delta-matroids is not closed under loop complementation, meaning that the loop complementation of a delta-matroid may not be a delta-matroid, as shown in the following example.
	
	\begin{example}
		Let \( D = ([n], \mathcal{F}) \) be a set system with \( \mathcal{F} = 2^{[n]} \setminus \{[n]\} \). It is easy to check that \( D \) is a delta-matroid. Now let \( n \geq 3 \) and consider \( \{1\} \subseteq [n] \). Then \( D + 1 = ([n], \mathcal{F}') \) is not a delta-matroid. Note that \( \mathcal{F}' \) contains no set \( I \) with \( |I| < n \) and \( 1 \in I \), but does contain \( \emptyset \) and \( [n] \). By taking \( X = \emptyset \), \( Y = [n] \), and \( x = 1 \in X \triangle Y \), there exists no \( j \in X \triangle Y \) such that \( X \triangle \{1, j\} \in \mathcal{F}' \).
	\end{example}
	
	For a delta-matroid \( D = ([n], \mathcal{F}) \), let \( \mathcal{F}_{\max}(D) \) and \( \mathcal{F}_{\min}(D) \) denote the sets of feasible sets of \( D \) with maximum and minimum cardinality, respectively. Define \( D_{\max} := ([n], \mathcal{F}_{\max}(D)) \) and \( D_{\min} := ([n], \mathcal{F}_{\min}(D)) \). Then \( D_{\max} \) is called the \emph{upper matroid} and \( D_{\min} \) the \emph{lower matroid}, with \( \mathcal{F}_{\max}(D) \) and \( \mathcal{F}_{\min}(D) \) as their respective families of bases.
	
	For a delta-matroid (or matroid) \( D = ([n], \mathcal{F}) \) and an element \( i \in [n] \), if \( i \) is contained in no feasible set (or basis, in the case of a matroid) of \( D \), then \( i \) is called a \emph{loop} of \( D \).
	
	\begin{definition}[\cite{Chun2019}]
		\normalfont
		Let \( D = ([n], \mathcal{F}) \) be a delta-matroid.
		\begin{enumerate}
			\item An element \( i \in [n] \) is a \emph{ribbon loop} if \( i \) is a loop in \( D_{\min} \).
			\item A ribbon loop \( i \) is \emph{non-orientable} if \( i \) is a ribbon loop in \( D \ast i \), and \emph{orientable} otherwise.
		\end{enumerate}
	\end{definition}

	\section{Group actions on set systems}\label{3}
	
	It has been shown in \cite{Brijder2011} that, for any fixed element \( i \in [n] \), the twist \( \ast i \) and loop complementation \( + i \) are involutions (i.e., of order 2). These two operations generate a group \( \mathcal{G} \) isomorphic to \( S_3 \), with the presentation:
	\[
	S_3 \cong \mathcal{G} := \langle \ast, + \mid \ast^2, +^2, (\ast+)^3 \rangle.
	\]
	We note that for any word \( g = g_1g_2\dots g_k \) in the alphabet \( \{\ast, +\} \), we define:
	\[
	D g i := ( \cdots (D g_k i) g_{k-1} i \cdots ) g_1 i.
	\]
	For instance, if \( g = +\ast \), then \( D (+\ast) i = (D \ast i) + i \). In particular, \( +\ast+ = \ast+\ast \in \mathcal{G} \), which is also an involution, denoted by \( \overline{\ast} \). In fact, it has been shown in \cite{Brijder2011} that for \( D = ([n], \mathcal{F}) \) and \( i \in [n] \), \( D \overline{\ast} i = ([n], \mathcal{F}') \), where
	\[
	\mathcal{F}' = \mathcal{F} \triangle \{ I \setminus \{i\} \mid I \in \mathcal{F} \text{ and } i \in I \}.
	\]
	Similarly, for \( I \subseteq [n] \), \( D \overline{\ast} I = ([n], \mathcal{F}') \), we have \( X \in \mathcal{F}' \) if and only if
	\[
	\left| \{ Y \in \mathcal{F} \mid X \subseteq Y \subseteq X \cup I \} \right|
	\]
	is odd. The six operations in \( \mathcal{G} \) are called \emph{invertible vertex flips}. A delta-matroid is said to be \emph{vf-safe} \cite{Chun2019F} if the application of every sequence of invertible vertex flips to each element results in a delta-matroid.
	
	Note that the invertible vertex flips commute on different elements. Therefore, for any distinct \( i, j \in [n] \) and any \( g_1, g_2 \in \mathcal{G} \), we have \( (D g_1 i) g_2 j = (D g_2 j) g_1 i \). For any subset \( I = \{i_1, i_2, \dots, i_k\} \subseteq [n] \) and any \( g \in \mathcal{G} \), we can unambiguously define
	\[
	D g I = ( \cdots (D g i_1) g i_2 \cdots ) g i_k.
	\]
	
	Let \( \mathcal{D}_{(n)} \) denote the set of all set systems with ground set \( [n] \). Recall that the group \( \mathcal{G} \) acts on any fixed element of a set system \( D \in \mathcal{D}_{(n)} \). This action extends naturally to a group action of \( \mathcal{G}^n \) on \( \mathcal{D}_{(n)} \): rather than acting on a single distinguished element, the group elements act independently and simultaneously on all elements. For any \( \hat{g} = (g_1, g_2, \dots, g_n) \in \mathcal{G}^n \) and any \( D \in \mathcal{D}_{(n)} \), the action of \( \hat{g} \) on \( D \) applies \( g_i \) to element \( i \) of \( D \). We view the indexing as a map, i.e., consider \( \hat{g} \) as \( \hat{g} : [n] \rightarrow \mathcal{G} \), so that the action applies \( \hat{g}(i) = g_i \) to element \( i \in [n] \). The multiplication in the group \( \mathcal{G}^n \) is denoted by \( \circ \) and defined as
	\[
	\hat{g} \circ \hat{h} = (g_1 h_1, g_2 h_2, \dots, g_n h_n),
	\]
	for any \( \hat{g} = (g_1, g_2, \dots, g_n), \hat{h} = (h_1, h_2, \dots, h_n) \in \mathcal{G}^n \).
	
	Since the invertible vertex flips commute on different elements, every sequence of invertible vertex flips admits a unique expression of the form
	\[
	D \prod_{i \in [n]} g_i(i),
	\]
	where \( g_i \in \mathcal{G} \). With the preceding notation, we can write the action of \( \hat{g} \in \mathcal{G}^n \) on \( D \) as
	\[
	\hat{g} \cdot D = D \Gamma(\hat{g}),
	\]
	where \( \Gamma(\hat{g}) = \prod_{i \in [n]} [\hat{g}(i)](i) = \prod_{i \in [n]} g_i(i) \).
	
	Note that for any \( \hat{g}, \hat{h} \in \mathcal{G}^n \), the group action satisfies
	\[
	(\hat{g} \circ \hat{h}) \cdot D = \hat{g} \cdot (\hat{h} \cdot D).
	\]
	This follows from the calculation below:
	\[
	\begin{aligned}
		(\hat{g} \circ \hat{h}) \cdot D
		&= D \prod_{i=1}^{n} (g_i h_i)(i) \\
		&= \left( D \prod_{i=1}^{n} h_i(i) \right) \prod_{i=1}^{n} g_i(i) \\
		&= \hat{g} \cdot (\hat{h} \cdot D).
	\end{aligned}
	\]
	This implies the operator identity
	\[
	(D \Gamma(\hat{h})) \Gamma(\hat{g}) = D \Gamma(\hat{g} \circ \hat{h}).
	\]
	
	A set system \( D \) is said to be \emph{canonical self-duality} if there exists a sequence of invertible vertex flips applied to certain elements such that the resulting set system equals \( D \). For example, consider \( D = ([2], \{\{1\}, \{2\}\}) \), and let \( X = \{1, 2\} \subseteq [2] \). After performing a twist operation on \( D \) with respect to \( X \), the resulting delta-matroid equals \( D \), that is, \( D \ast X = D \).
	
	However, in practical research, we often consider a more relaxed form of self-duality, where the result of a sequence of invertible vertex flip operations is not directly equal to the original set system but becomes equal after some relabeling of elements (i.e., permutations). For instance, consider \( D = ([2], \{\emptyset, \{1\}, \{1, 2\}\}) \) and let \( X = \{1, 2\} \subseteq [2] \). Then \( D \ast X = ([2], \{\emptyset, \{2\}, \{1, 2\}\}) \). It can be observed that \( D \ast X \) equals \( D \) under the permutation that maps \( 1 \) to \( 2 \) and \( 2 \) to \( 1 \). Based on this, we introduce the concept of natural self-duality as follows: a set system \( D \) is said to be \emph{natural self-duality} if there exists a sequence of invertible vertex flips applied to elements such that the resulting set system equals \( D \) under some permutation.
	
	Due to the distinction between canonical and natural self-duality, we begin by considering permutations on set systems. Let \( D = ([n], \mathcal{F}) \) be a set system and \( \pi \) be a permutation in \( S_n \). Then we define the action of \( \pi \) on \( D \) as:
	\begin{align*}
		\pi: D &\mapsto D_\pi, \\
		i &\mapsto \pi(i),
	\end{align*}
	where
	\[
	D_\pi = ([n], \{\pi(I) \mid I \in \mathcal{F}\}),
	\]
	and for \( I = \{i_1, i_2, \dots, i_k\} \), we have \( \pi(I) = \{\pi(i_1), \pi(i_2), \dots, \pi(i_k)\} \).
	
	Note that for a set system \( D \in \mathcal{D}_{(n)} \) and permutations \( \pi_1, \pi_2 \in S_n \), the composition satisfies:
	\[
	(\pi_1 \pi_2)D = \pi_1(\pi_2 D) = (D_{\pi_2})_{\pi_1}.
	\]
	
	For any \( \hat{g} = (g_1, g_2, \dots, g_n) \in \mathcal{G}^n \) and \( \pi \in S_n \), we define \[ \hat{g}\pi^{-1}(i) := \hat{g}(\pi^{-1}(i)) = g_{\pi^{-1}(i)}. \] It follows that
	\[
	\hat{g}\pi^{-1} = (g_{\pi^{-1}(1)}, g_{\pi^{-1}(2)}, \dots, g_{\pi^{-1}(n)}).
	\]
	In fact, \( \hat{g}\pi^{-1} \) is the reindexing of \( \hat{g} \) by \( \pi^{-1} \), which corresponds to permuting the indices of \( [n] \).
	
	Specifically, for the operator \( \Gamma \), by substituting \( j = \pi^{-1}(i) \) and replacing \( j \) by \( i \) in the product, we obtain:
	\[
	\Gamma(\hat{g}\pi^{-1}) = \prod_{i \in [n]} g_{\pi^{-1}(i)}(i) = \prod_{i \in [n]} g_i(\pi(i)).
	\]
	
	We now introduce a new group action on \( \mathcal{D}_{(n)} \) by utilizing the semidirect product of the symmetric group \( S_n \) and the group \( \mathcal{G}^n \).
	
	\begin{proposition}
		Let \(\phi: S_n \rightarrow \operatorname{Aut}(\mathcal{G}^n)\) be defined by \(\phi(\pi) \mapsto \phi_\pi\), where \(\phi_\pi(\hat{g}) = \hat{g}\pi^{-1}\). Then \(\phi\) is a homomorphism, and the semidirect product \(\mathcal{G}^n \rtimes_\phi S_n\) acts on \(\mathcal{D}_{(n)}\) by \((\hat{g},\pi)D = \hat{g} \cdot (\pi D) = \hat{g} \cdot D_\pi = D_\pi \Gamma(\hat{g})\).
	\end{proposition}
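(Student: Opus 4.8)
The plan is to check the three assertions separately: that each $\phi_\pi$ is an automorphism of $\mathcal{G}^n$; that $\pi\mapsto\phi_\pi$ is a group homomorphism (so the semidirect product is a well-defined group); and that the stated formula obeys the two group-action axioms. The chain $(\hat g,\pi)D = \hat g\cdot(\pi D) = \hat g\cdot D_\pi = D_\pi\Gamma(\hat g)$ needs no argument — it merely unwinds the definitions $\pi D = D_\pi$ and $\hat g\cdot E = E\Gamma(\hat g)$ — so I would dispose of it in one line.

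For the first two points, the observation is that $\phi_\pi$ is nothing but a permutation of the coordinates of $\mathcal{G}^n$. Since the product on $\mathcal{G}^n$ is coordinatewise, $\phi_\pi(\hat g\circ\hat h)$ and $\phi_\pi(\hat g)\circ\phi_\pi(\hat h)$ have the same $i$-th coordinate $g_{\pi^{-1}(i)}h_{\pi^{-1}(i)}$, so $\phi_\pi$ is an endomorphism; it is bijective with inverse $\phi_{\pi^{-1}}$, hence $\phi_\pi\in\operatorname{Aut}(\mathcal{G}^n)$. For the homomorphism property, comparing $i$-th coordinates gives $[\phi_{\pi_1}(\phi_{\pi_2}(\hat g))]_i = g_{\pi_2^{-1}(\pi_1^{-1}(i))} = g_{(\pi_1\pi_2)^{-1}(i)} = [\phi_{\pi_1\pi_2}(\hat g)]_i$, so $\phi_{\pi_1}\circ\phi_{\pi_2} = \phi_{\pi_1\pi_2}$. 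Consequently $\mathcal{G}^n\rtimes_\phi S_n$ is a group, with multiplication $(\hat g_1,\pi_1)(\hat g_2,\pi_2) = \big(\hat g_1\circ(\hat g_2\pi_1^{-1}),\ \pi_1\pi_2\big)$.

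The substance of the proposition is the action axiom $\big((\hat g_1,\pi_1)(\hat g_2,\pi_2)\big)D = (\hat g_1,\pi_1)\big((\hat g_2,\pi_2)D\big)$; the identity $(\hat e,\mathrm{id})D = D$ is trivial since $D_{\mathrm{id}}=D$ and $\Gamma(\hat e)$ is the empty sequence of flips. The hard part, and the one new ingredient I would isolate as a lemma, is that relabeling intertwines with invertible vertex flips:
\[
\big(D\,\Gamma(\hat g)\big)_\pi \;=\; D_\pi\,\Gamma(\hat g\pi^{-1})\qquad\text{for every } D\in\mathcal{D}_{(n)},\ \hat g\in\mathcal{G}^n,\ \pi\in S_n .
\]
Since flips on distinct elements commute, $D\,\Gamma(\hat g)$ is obtained by applying to element $i$ the single-element flip $g_i$ in any order, and likewise $D_\pi\,\Gamma(\hat g\pi^{-1})$ applies $g_{\pi^{-1}(j)}$ to element $j$; so the lemma reduces to the one-element identity $(D\,g\,i)_\pi = (D_\pi)\,g\,\pi(i)$ for $g\in\mathcal{G}$, and since $\mathcal{G}=\langle\ast,+\rangle$ it suffices to check this for $g=\ast$ and $g=+$. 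Both cases follow directly from the set-theoretic descriptions of twist and loop complementation together with the fact that $\pi$ is a bijection (so $\pi(X\triangle Y)=\pi(X)\triangle\pi(Y)$, and $\pi$ preserves the containments $X\setminus I\subseteq Y\subseteq X$ appearing in the definition of $+$).

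With the lemma in hand the verification is purely formal. Writing $D' := (\hat g_2,\pi_2)D = D_{\pi_2}\Gamma(\hat g_2)$, we get
\[
(\hat g_1,\pi_1)\big((\hat g_2,\pi_2)D\big) = D'_{\pi_1}\Gamma(\hat g_1) = \Big((D_{\pi_2})_{\pi_1}\,\Gamma(\hat g_2\pi_1^{-1})\Big)\Gamma(\hat g_1) = \Big(D_{\pi_1\pi_2}\,\Gamma(\hat g_2\pi_1^{-1})\Big)\Gamma(\hat g_1),
\]
using the lemma and $(D_{\pi_2})_{\pi_1} = D_{\pi_1\pi_2}$. Applying the operator identity $(D\Gamma(\hat h))\Gamma(\hat g) = D\Gamma(\hat g\circ\hat h)$ recorded just before the proposition, this equals $D_{\pi_1\pi_2}\,\Gamma\big(\hat g_1\circ(\hat g_2\pi_1^{-1})\big)$, which is precisely $\big((\hat g_1,\pi_1)(\hat g_2,\pi_2)\big)D$ by the semidirect-product multiplication above. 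Thus the only step that is not bookkeeping is the intertwining lemma; I would expect keeping the $\pi$ versus $\pi^{-1}$ conventions straight to be the chief source of potential error.
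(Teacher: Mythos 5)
Your proposal is correct and follows essentially the same route as the paper: verify that $\phi$ is a homomorphism by comparing coordinates, then check the action axiom via the chain $(\hat g_1,\pi_1)\bigl((\hat g_2,\pi_2)D\bigr)=D_{\pi_1\pi_2}\,\Gamma\bigl(\hat g_1\circ(\hat g_2\pi_1^{-1})\bigr)$. The only difference is that you isolate and prove the intertwining identity $(D\,\Gamma(\hat g))_\pi=D_\pi\,\Gamma(\hat g\pi^{-1})$ (reducing it to the single-element cases $g=\ast$ and $g=+$), whereas the paper uses this step without comment in the equality $\pi_1\bigl[D_{\pi_2}\prod_i h_i(i)\bigr]=(D_{\pi_2})_{\pi_1}\prod_i h_i(\pi_1(i))$ — so your write-up is, if anything, slightly more complete.
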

	
	\begin{proof}
		For any \(i \in [n]\) and \( \pi_1, \pi_2 \in S_n \),
		\[
		\hat{g}(\pi_2^{-1}\pi_1^{-1})(i) = \hat{g}\left((\pi_2^{-1}\pi_1^{-1})(i)\right) = \hat{g}\left(\pi_2^{-1}(\pi_1^{-1}(i))\right).
		\]
		Let \(j = \pi_1^{-1}(i)\), then
		\[
		\hat{g}\left(\pi_2^{-1}(\pi_1^{-1}(i))\right) = \hat{g}\left(\pi_2^{-1}(j)\right) = \hat{g}\pi_2^{-1}(j) = \hat{g}\pi_2^{-1}(\pi_1^{-1}(i)) = \left((\hat{g}\pi_2^{-1})\pi_1^{-1}\right)(i).
		\]
		It follows that \(\phi\) is a homomorphism since
		\[
		\phi_{\pi_1\pi_2}(\hat{g}) = \hat{g}(\pi_1\pi_2)^{-1} = \hat{g}(\pi_2^{-1}\pi_1^{-1}) = (\hat{g}\pi_2^{-1})\pi_1^{-1} = \phi_{\pi_1}\phi_{\pi_2}(\hat{g}).
		\]
		
		Thus, we have a well-defined semidirect product \(\mathcal{G}^n \rtimes_\phi S_n\) with multiplication given by
		\[
		(\hat{g},\pi_1)(\hat{h},\pi_2) = (\hat{g} \circ \phi_{\pi_1}(\hat{h}), \pi_1\pi_2) = (\hat{g} \circ \hat{h}\pi_1^{-1}, \pi_1\pi_2).
		\]
		
		It remains to prove that the semidirect product group \(\mathcal{G}^n \rtimes_\phi S_n\) acts on \(\mathcal{D}_{(n)}\), i.e., to verify:
		\begin{itemize}
			\item[(1)] For any \(D \in \mathcal{D}_{(n)}\) and \((\hat{g},\pi) \in \mathcal{G}^n \rtimes_\phi S_n\), we have \((\hat{g},\pi)D \in \mathcal{D}_{(n)}\) and \((\mathbf{1},\iota)D = D\), where \(\mathbf{1}\) and \(\iota\) are the identities of \(\mathcal{G}^n\) and \(S_n\), respectively, and \((\mathbf{1},\iota)\) is the identity element of the semidirect product;
			\item[(2)] For any \(D \in \mathcal{D}_{(n)}\) and \((\hat{g},\pi_1), (\hat{h},\pi_2) \in \mathcal{G}^n \rtimes_\phi S_n\),
			\[
			(\hat{g},\pi_1)\left((\hat{h},\pi_2)D\right) = \left((\hat{g},\pi_1)(\hat{h},\pi_2)\right)D.
			\]
		\end{itemize}
		
		Item (1) is evident, so we only verify (2). By the definition of the semidirect product action:
		\[
		\begin{aligned}
			(\hat{g},\pi_1)\left((\hat{h},\pi_2)D\right)
			&= (\hat{g},\pi_1)\left(D_{\pi_2}\Gamma(\hat{h})\right) \\
			&= (\hat{g},\pi_1)\left(D_{\pi_2} \prod_{i\in [n]} h_i(i)\right) \\
			&= \hat{g} \cdot \left(\pi_1\left[D_{\pi_2} \prod_{i\in [n]} h_i(i)\right]\right) \\
			&= \hat{g} \cdot \left((D_{\pi_2})_{\pi_1} \prod_{i\in [n]} h_i(\pi_1(i))\right) \\
			&= D_{\pi_1\pi_2} \Gamma(\hat{h}\pi_1^{-1}) \Gamma(\hat{g}) \\
			&= D_{\pi_1\pi_2} \Gamma(\hat{g} \circ \hat{h}\pi_1^{-1}) \\
			&= \left((\hat{g},\pi_1)(\hat{h},\pi_2)\right)D.
		\end{aligned}
		\]
	\end{proof}
	
	Note that \( (\hat{g},\pi)^{-1} = (\hat{g}^{-1}\pi, \pi^{-1}) \), since
	\[
	(\hat{g},\pi)(\hat{g}^{-1}\pi, \pi^{-1}) = (\mathbf{1},\iota).
	\]
	
	We conclude this section by formalizing the notation for orbits and stabilizers under group actions.
	The orbit of \( D \) is given by
	\[
	\operatorname{Orb}(D) = \left\{ (\hat{g},\pi)D \bigm| (\hat{g},\pi) \in \mathcal{G}^n \rtimes_\phi S_n \right\}.
	\]
	In particular, we often focus on the action of the subgroup \( \mathcal{G}^n \rtimes_\phi \{\iota\} \), where \(\iota\) is the identity element of \(S_n\), and denote its orbit by
	\[
	\operatorname{Orb}_\iota(D) = \left\{ (\hat{g},\iota)D \bigm| (\hat{g},\iota) \in \mathcal{G}^n \rtimes_\phi \{\iota\} \right\}.
	\]
	
	\begin{definition}
		\normalfont
		A set system \( D \) is said to be \emph{self-$\hat{g}$ via $\pi$} if \( \hat{g} \) is not the identity and there exists a \( \pi \in S_n \) such that \( (\hat{g},\pi)D = D \). Furthermore, \( D \) is said to be \emph{canonically self-$\hat{g}$} if \( \pi = \iota \).
	\end{definition}
	
	To study the self-$\hat{g}$ property of a set system, i.e., to find some \( (\hat{g},\pi) \in \mathcal{G}^n \rtimes_\phi S_n \) such that \( (\hat{g},\pi)D = D \), we are led to investigate the orbits and stabilizers of this group action. In Section 5, we shall provide a characterization of the orbits of a given vf-safe delta-matroid \( D \) under the action of the group \( \mathcal{G}^n \rtimes_\phi S_n \).
	
	We define an element \( \hat{g} \in \mathcal{G}^n \) to be \emph{uniform} if all entries of \( \hat{g} \) are the same, and this common entry is a non-identity element of \( \mathcal{G} \). Our primary focus will be on the case where a set system \( D \) is self-$\hat{g}$ with \( \hat{g} \) being uniform.
	
	\begin{definition}
		\normalfont
		A set system \( D \) is said to be \emph{self-twual via $\pi$} if there exists a uniform \( \hat{g} \) and some \( \pi \in S_n \) such that \( (\hat{g},\pi)D = D \). Furthermore, \( D \) is said to be \emph{canonically self-twual} if \( \pi = \iota \).
	\end{definition}
	
	For any uniform \( \hat{g} \), we can write \( \hat{g} = \mathbf{1}g \), where \( g \in \mathcal{G} \), \( g \) is not the identity element of \( \mathcal{G} \), and \( \mathbf{1} \) is the identity of \( \mathcal{G}^n \). A set system \( D \) is defined to be \emph{canonically self-twist} if \( (\mathbf{1}\ast, \iota)D = D \), and \emph{canonically self-loop complementation} if \( (\mathbf{1}+, \iota)D = D \).
	
	\section{Propagation of self-twual in orbits}\label{4}
	
	In this section, we show that if a set system \( D \) is self-$\hat{g}$, then any
	\( D' \in \operatorname{Orb}(D) \) is also self-$\hat{g}'$ for some \( \hat{g}' \).
	Thus, once any set system is self-$\hat{g}$, this property propagates through its entire orbit.
	
	\begin{theorem}\label{near conjugate}
		If \( D \) is self-\(\hat{g}\) via \( \mu \) and \( D' \in \operatorname{Orb}(D) \) with \( (\hat{h},\pi)D = D' \), then \( D' \) is self-\(\hat{g}'\) via \( \mu' \), where
		\[
		\hat{g}' = \hat{h} \circ \hat{g}\pi^{-1} \circ \hat{h}^{-1}\mu'^{-1},
		\]
		and \( \mu' = \pi\mu\pi^{-1} \).
	\end{theorem}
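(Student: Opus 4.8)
The plan is to read this as the standard orbit--stabilizer fact that conjugating a stabilizer element moves it to the stabilizer of the translated point, and then to make that conjugation explicit inside $\mathcal{G}^n \rtimes_\phi S_n$. Concretely, since $D$ is self-$\hat{g}$ via $\mu$ we have $(\hat{g},\mu) \in \operatorname{Stab}(D)$, and I would set $(\hat{g}',\mu') := (\hat{h},\pi)(\hat{g},\mu)(\hat{h},\pi)^{-1}$. Using the group-action axioms of the Proposition above together with $(\hat{h},\pi)D = D'$, $(\hat{g},\mu)D = D$, and $(\hat{h},\pi)^{-1}(\hat{h},\pi) = (\mathbf{1},\iota)$, one obtains in three strokes
\[
(\hat{g}',\mu')D' = (\hat{h},\pi)(\hat{g},\mu)(\hat{h},\pi)^{-1}\bigl((\hat{h},\pi)D\bigr) = (\hat{h},\pi)(\hat{g},\mu)D = (\hat{h},\pi)D = D'.
\]
So the whole content of the theorem reduces to computing the conjugate $(\hat{h},\pi)(\hat{g},\mu)(\hat{h},\pi)^{-1}$ and matching it with the pair displayed in the statement.

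For that computation I would invoke the multiplication rule $(\hat{a},\rho_1)(\hat{b},\rho_2) = (\hat{a}\circ \hat{b}\rho_1^{-1},\,\rho_1\rho_2)$ and the inverse formula $(\hat{h},\pi)^{-1} = (\hat{h}^{-1}\pi,\,\pi^{-1})$ recorded just after the Proposition. First, $(\hat{h},\pi)(\hat{g},\mu) = (\hat{h}\circ \hat{g}\pi^{-1},\,\pi\mu)$; then multiplying on the right by $(\hat{h}^{-1}\pi,\,\pi^{-1})$ produces $S_n$-component $\pi\mu\pi^{-1}$, which is $\mu'$ by definition, and $\mathcal{G}^n$-component $(\hat{h}\circ \hat{g}\pi^{-1})\circ(\hat{h}^{-1}\pi)(\pi\mu)^{-1}$. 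The only step that is more than cosmetic is simplifying the reindexed factor: using the rule $(\hat{x}\alpha)\beta = \hat{x}(\alpha\beta)$ for reindexing (established in the proof of the Proposition) and $(\pi\mu)^{-1} = \mu^{-1}\pi^{-1}$, I get $(\hat{h}^{-1}\pi)(\pi\mu)^{-1} = \hat{h}^{-1}(\pi\mu^{-1}\pi^{-1}) = \hat{h}^{-1}\mu'^{-1}$, and then associativity of $\circ$ delivers $\hat{g}' = \hat{h}\circ \hat{g}\pi^{-1}\circ \hat{h}^{-1}\mu'^{-1}$ exactly as claimed.

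I do not expect any deep obstacle here; the one place to be careful --- and the only thing I would double-check --- is the bookkeeping of the reindexing conventions: that $\hat{g}\pi^{-1}(i) = g_{\pi^{-1}(i)}$, that reindexing acts on the right of $\mathcal{G}^n$, and which permutation twists which $\mathcal{G}^n$-factor in the semidirect product, since a misplaced inverse is the easiest slip. One further small point to settle before asserting the result literally is that $\hat{g}'$ is not the identity of $\mathcal{G}^n$, as required by the definition of self-$\hat{g}'$; in the degenerate case $\hat{g}' = \mathbf{1}$ the identity $(\hat{g}',\mu')D' = D'$ still records the genuine symmetry $D'_{\mu'} = D'$, which is the meaningful statement.
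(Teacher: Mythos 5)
Your proposal is correct and is essentially the paper's own argument: the paper fixes the stated $\hat{g}'$ and verifies $(\hat{g}',\mu')(\hat{h},\pi)=(\hat{h},\pi)(\hat{g},\mu)$ in $\mathcal{G}^n\rtimes_\phi S_n$, which is exactly your conjugation $(\hat{g}',\mu')=(\hat{h},\pi)(\hat{g},\mu)(\hat{h},\pi)^{-1}$ read in the other direction, and your bookkeeping of the reindexing and inverses matches the paper's. Your closing remark about the degenerate possibility $\hat{g}'=\mathbf{1}$ is a legitimate edge case that the paper's definition of self-$\hat{g}$ technically excludes but its proof does not address.
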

	
	\begin{proof}
		It suffices to verify that \( (\hat{g}',\mu')D' = D' \). Since \( (\hat{g},\mu)D = D \) and \( (\hat{h},\pi)D = D' \), it is equivalent to show that
		\[
		(\hat{g}',\mu')[(\hat{h},\pi)D] = (\hat{h},\pi)[(\hat{g},\mu)D].
		\]
		
		By the compatibility of group actions, this reduces to verifying the equality
		\begin{align}\label{change}
			(\hat{g}',\mu')(\hat{h},\pi) = (\hat{h},\pi)(\hat{g},\mu)
		\end{align}
		in the semidirect product group \( \mathcal{G}^n \rtimes_\phi S_n \). We compute:
		\[
		\begin{aligned}
			(\hat{g}',\mu')(\hat{h},\pi)
			&= (\hat{h} \circ \hat{g}\pi^{-1} \circ \hat{h}^{-1}\mu'^{-1}, \mu')(\hat{h},\pi) \\
			&= (\hat{h} \circ \hat{g}\pi^{-1} \circ \hat{h}^{-1}\mu'^{-1} \circ \hat{h}\mu'^{-1}, \mu'\pi) \\
			&= (\hat{h} \circ \hat{g}\pi^{-1} \circ (\hat{h}^{-1} \circ \hat{h})\mu'^{-1}, \pi\mu) \\
			&= (\hat{h} \circ \hat{g}\pi^{-1}, \pi\mu) \\
			&= (\hat{h},\pi)(\hat{g},\mu).
		\end{aligned}
		\]
		
		This process can be represented by the following commutative diagram:
		\[
		\begin{tikzcd}[row sep=1.5cm, column sep=4.5cm]
			D' \arrow[r, "{(\hat{g}',\mu')}"] & D' \\
			D \arrow[r, "{(\hat{g},\mu)}"] \arrow[u, "{(\hat{h},\pi)}"] & D \arrow[u, "{(\hat{h},\pi)}"]
		\end{tikzcd}
		\]
	\end{proof}
	
	When specializing the theorem to the case where \( D \) is canonically self-\(\hat{g}\),
	every set system in the orbit \( \operatorname{Orb}_\iota(D) \) is also canonically
	self-\(\hat{g}'\) for some \( \hat{g}' \) conjugate to \( \hat{g} \).
	
	\begin{corollary}\label{conjugate}
		If \( D \) is canonically self-\(\hat{g}\) and \( D' = (\hat{h},\iota)D \in \operatorname{Orb}_\iota(D) \),
		then \( D' \) is also canonically self-\(\hat{g}'\), where \( \hat{g}' \) is the conjugate
		of \( \hat{g} \) by \( \hat{h} \), explicitly given by \( \hat{g}' = \hat{h} \circ \hat{g} \circ \hat{h}^{-1} \).
	\end{corollary}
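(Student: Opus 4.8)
The plan is to obtain this as the degenerate case $\pi=\iota$, $\mu=\iota$ of Theorem~\ref{near conjugate}. First I would note that the hypothesis $D'=(\hat h,\iota)D\in\operatorname{Orb}_\iota(D)$ is precisely the hypothesis of Theorem~\ref{near conjugate} with permutation datum $\pi=\iota$, while ``$D$ is canonically self-$\hat g$'' is by definition ``$D$ is self-$\hat g$ via $\mu$'' with $\mu=\iota$. Feeding these into the conclusion of that theorem, the new permutation is $\mu'=\pi\mu\pi^{-1}=\iota$, so $D'$ is self-$\hat g'$ via the identity permutation, i.e.\ canonically self-$\hat g'$.

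Next I would simplify the element $\hat g'=\hat h\circ\hat g\pi^{-1}\circ\hat h^{-1}\mu'^{-1}$ supplied by the theorem. Since $\pi=\iota$ and $\mu'=\iota$, the reindexing maps $\hat g\mapsto\hat g\pi^{-1}$ and $\hat h^{-1}\mapsto\hat h^{-1}\mu'^{-1}$ are the identity on $\mathcal G^n$ (reindexing by $\iota^{-1}=\iota$ changes nothing, as $\hat g\iota^{-1}=\hat g$), so the formula collapses to $\hat g'=\hat h\circ\hat g\circ\hat h^{-1}$, the $\circ$-conjugate of $\hat g$ by $\hat h$ in $\mathcal G^n$. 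I would then add the one-line remark that conjugation sends non-identity elements to non-identity elements, so $\hat g'\neq\mathbf 1$ and $D'$ indeed meets the definition of being self-$\hat g'$.

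A fully self-contained alternative, which is just the specialization of the proof of Theorem~\ref{near conjugate}, is to argue directly inside the subgroup $\mathcal G^n\rtimes_\phi\{\iota\}\cong(\mathcal G^n,\circ)$: one checks that $(\hat g',\iota)(\hat h,\iota)=(\hat h,\iota)(\hat g,\iota)$ reduces to $\hat g'\circ\hat h=\hat h\circ\hat g$, which is solved by $\hat g'=\hat h\circ\hat g\circ\hat h^{-1}$; applying both sides of this group identity to $D$ and using $(\hat g,\iota)D=D$ together with $(\hat h,\iota)D=D'$ gives $(\hat g',\iota)D'=D'$. Either way there is no genuine obstacle; the only point needing a moment's attention is verifying that the semidirect-product reindexing really disappears once $\pi$ and $\mu'$ are trivial, which is immediate.
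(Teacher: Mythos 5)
Your proposal is correct and is exactly the paper's route: the corollary is stated there as the immediate specialization of Theorem~\ref{near conjugate} to $\pi=\iota$ and $\mu=\iota$, under which the reindexings $\hat g\pi^{-1}$ and $\hat h^{-1}\mu'^{-1}$ are trivial and the formula collapses to $\hat g'=\hat h\circ\hat g\circ\hat h^{-1}$. Your extra observation that conjugation preserves non-identity (so $D'$ genuinely satisfies the definition of self-$\hat g'$) is a small but welcome addition the paper leaves implicit.
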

	
	Theorem~\ref{near conjugate} implies that if we find that any set system \( D \) is
	self-\(\hat{g}\) via \( \mu \), we can then test for the suitability of \( \hat{h} \) and \( \pi \)
	such that
	\[
	\hat{g}' = \hat{h} \circ \hat{g}\pi^{-1} \circ \hat{h}^{-1}\pi\mu^{-1}\pi^{-1}
	\]
	is uniform, and thus identify a self-twual set system in \( \operatorname{Orb}(D) \).
	
	\begin{example}
		Let \( D = ([3], \{\{3\}, \{1,3\}, \{2,3\}\}) \). It is easy to verify that \( D \) is self-\(\hat{g}\) with \(\hat{g} = (\ast, +, +)\) via \(\iota\).
		Let \(\hat{h} = (+, \ast, \ast)\), then
		\[
		D' = (\hat{h},\iota)D = D + 1 \ast 2 \ast 3 = ([3], \{\emptyset, \{1\}, \{2\}\}),
		\]
		and
		\[
		\hat{g}' = \hat{h} \circ \hat{g} \circ \hat{h}^{-1} = (\overline{\ast}, \overline{\ast}, \overline{\ast}).
		\]
		Then \( (\hat{g}',\iota)D' = D' \overline{\ast} \{1,2,3\} = D' \), so \( D' \) is canonically self-twual.
	\end{example}
	
	Since \( \mathcal{G} \) is non-commutative, we write \( \prod_{i=m}^{1} g_i \) in the following theorem to indicate the product \( g_m g_{m-1} \dots g_1 \). We also denote by \( |g| \) the order of a group element \( g \in \mathcal{G} \). The following theorem may assist in constructing self-twual set systems.
	
	\begin{theorem}\label{Orb}
		Let \( S \) be a set system, \( g \in \mathcal{G} \), and \( \mu \) a permutation. Then the following are equivalent:
		\begin{itemize}
			\item[$(1)$] There exists a set system \( D \in \operatorname{Orb}(S) \) that is self-\(\hat{g}\) via the permutation \( \mu \), where \( \hat{g} = (g,g,\dots,g) \) is uniform;
			\item[$(2)$] For every set system \( D' \in \operatorname{Orb}(S) \), \( D' \) is self-\(\hat{g}' = (g_1,g_2,\dots,g_n)\) via \( \mu' \), where \( \mu' = \pi\mu\pi^{-1} \) for some \( \pi \), and \( \hat{g}' \) satisfies the following condition: if \( C = (c_1,c_2,\dots,c_m) \) is a cycle in the cycle decomposition of \( \mu' \), then
			\[
			\left|\prod_{i=m}^{1} g_{c_i}\right| = |g^m|;
			\]
			\item[$(3)$] There exists a set system \( D' \in \operatorname{Orb}(S) \) that is self-\(\hat{g}' = (g_1,g_2,\dots,g_n)\) via \( \mu' \), where \( \mu' = \mu \), and \( \hat{g}' \) satisfies the following condition: if \( C = (c_1,c_2,\dots,c_m) \) is a cycle in the cycle decomposition of \( \mu' \), then
			\[
			\left|\prod_{i=m}^{1} g_{c_i}\right| = |g^m|.
			\]
		\end{itemize}
	\end{theorem}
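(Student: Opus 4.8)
The plan is to establish the cycle of implications $(1)\Rightarrow(2)\Rightarrow(3)\Rightarrow(1)$, with Theorem~\ref{near conjugate} serving as the bookkeeping engine in every step. Beyond that theorem the only input I will need is the elementary fact that in $\mathcal{G}\cong S_3$ two elements are conjugate if and only if they have the same order, together with the observation that any cyclic rotation of a product $g_{c_m}g_{c_{m-1}}\cdots g_{c_1}$ is conjugate to that product (so in particular has the same order); I would record this as a short preliminary lemma.

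For $(1)\Rightarrow(2)$ I would take $D\in\operatorname{Orb}(S)$ self-$\hat{g}$ via $\mu$ with $\hat{g}=(g,\dots,g)$, write an arbitrary $D'\in\operatorname{Orb}(S)=\operatorname{Orb}(D)$ as $D'=(\hat{h},\pi)D$, and invoke Theorem~\ref{near conjugate}. Since $\hat{g}$ is uniform, $\hat{g}\pi^{-1}=\hat{g}$, so $D'$ is self-$\hat{g}'$ via $\mu'=\pi\mu\pi^{-1}$ with $g_i'=h_i\,g\,h_{\mu'^{-1}(i)}^{-1}$; along a cycle $C=(c_1,\dots,c_m)$ of $\mu'$ one has $\mu'^{-1}(c_i)=c_{i-1}$ (indices modulo $m$, $c_0:=c_m$), and the product $\prod_{i=m}^{1}g_{c_i}'$ telescopes to $h_{c_m}\,g^{m}\,h_{c_m}^{-1}$, which has order $|g^m|$ --- precisely the stated cycle condition. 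For $(2)\Rightarrow(3)$ I would apply $(2)$ to $D_0:=S$ itself, getting that $S$ is self-$\hat{g}_0$ via some $\mu_0=\pi_0\mu\pi_0^{-1}$ satisfying the cycle condition, and then pass to $D':=(\mathbf{1},\pi_0^{-1})S\in\operatorname{Orb}(S)$; Theorem~\ref{near conjugate} makes $D'$ self-$(\hat{g}_0\pi_0)$ via $\pi_0^{-1}\mu_0\pi_0=\mu$, and the cycle condition transfers because $\pi_0$ sends each cycle of $\mu$ to a cycle of $\mu_0$ while the product $\prod_{i=m}^{1}(\hat{g}_0\pi_0)(c_i)$ is literally the product attached to that cycle of $\mu_0$.

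The substantive step is $(3)\Rightarrow(1)$. Here I would start from $D'\in\operatorname{Orb}(S)$ self-$\hat{g}'=(g_1,\dots,g_n)$ via $\mu$ with the cycle condition, and search for $\hat{h}\in\mathcal{G}^n$ so that $D:=(\hat{h},\iota)D'$ is self-$(g,\dots,g)$ via $\iota\mu\iota^{-1}=\mu$. By Theorem~\ref{near conjugate} this means solving $h_i\,g_i\,h_{\mu^{-1}(i)}^{-1}=g$ for all $i\in[n]$, and the system decouples over the cycles of $\mu$. On a cycle $C=(c_1,\dots,c_m)$ (with $\mu^{-1}(c_i)=c_{i-1}$, $c_0:=c_m$) the equations for the non-initial indices are solved by the forward recursion $h_{c_k}=g\,h_{c_{k-1}}\,g_{c_k}^{-1}$, which expresses every $h_{c_k}$ in terms of $h_{c_1}$; substituting the resulting formula for $h_{c_m}$ into the single leftover equation reduces it to $h_{c_1}\,P\,h_{c_1}^{-1}=g^{m}$, where $P$ is a cyclic rotation of $\prod_{i=m}^{1}g_{c_i}$. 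By the preliminary lemma, $|P|=\bigl|\prod_{i=m}^{1}g_{c_i}\bigr|=|g^m|$ forces $P$ to be conjugate to $g^m$ in $\mathcal{G}$, so a valid $h_{c_1}$ exists and the recursion completes the cycle; carrying this out on every cycle yields $\hat{h}$, and since $g\neq e$ the tuple $(g,\dots,g)$ is uniform, so $(1)$ holds.

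The hard part is this last implication, and inside it the only genuinely non-formal point: that the cycle condition $|P|=|g^m|$ is not merely necessary --- which is what $(1)\Rightarrow(2)$ shows --- but also sufficient for the solvability of $h_{c_1}Ph_{c_1}^{-1}=g^m$. This is exactly where the structure of $\mathcal{G}\cong S_3$, in which equal order is equivalent to conjugacy, is used; it is the one place the argument relies on the specific group rather than on an arbitrary finite group. All other ingredients --- the telescoping identity, the transfer of the cycle condition under $\pi_0$, and the check that the constructed $\hat{h}$ really produces $(g,\dots,g)$ --- are routine manipulations with the semidirect-product multiplication, entirely in the spirit of the computation already performed in the proof of Theorem~\ref{near conjugate}.
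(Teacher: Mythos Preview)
Your proposal is correct and follows essentially the same route as the paper: both prove $(1)\Rightarrow(2)\Rightarrow(3)\Rightarrow(1)$ using Theorem~\ref{near conjugate} and the telescoping product along each cycle, with the key non-formal point being that equal order implies conjugacy in $\mathcal{G}\cong S_3$. The only cosmetic difference is that in $(3)\Rightarrow(1)$ the paper anchors the recursion at $h_{c_m}$ (so the closing equation involves $\prod_{i=m}^{1}g_{c_i}$ directly and no cyclic-rotation lemma is needed), whereas you anchor at $h_{c_1}$ and hence need the extra observation that a cyclic rotation of the product has the same order.
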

	
	\begin{proof}
		(1) \(\Rightarrow\) (2): Let \( D, D' \in \operatorname{Orb}(S) \), and suppose \( D \) is self-\(\hat{g} = (g,g,\dots,g)\) via \( \mu \). Then there exists \( (\hat{h},\pi) \) such that \( (\hat{h},\pi)D = D' \). By Theorem~\ref{near conjugate}, \( D' \) is self-\(\hat{g}'\) via \( \mu' = \pi\mu\pi^{-1} \), where
		\[
		\hat{g}' = \hat{h} \circ \hat{g}\pi^{-1} \circ \hat{h}^{-1}\mu'^{-1},
		\]
		and by equation (\ref{change}) in Theorem~\ref{near conjugate}, we have
		\[
		(\hat{h},\pi)(\hat{g},\mu) = (\hat{g}',\mu')(\hat{h},\pi).
		\]
		This implies that
		\[
		\begin{aligned}
			(\hat{g},\mu) &= (\hat{h},\pi)^{-1}(\hat{g}',\mu')(\hat{h},\pi) \\
			&= (\hat{h}^{-1}\pi, \pi^{-1})(\hat{g}' \circ \hat{h}\mu'^{-1}, \mu'\pi) \\
			&= \left((\hat{h}^{-1} \circ \hat{g}' \circ \hat{h}\mu'^{-1})\pi, \pi^{-1}\mu'\pi\right).
		\end{aligned}
		\]
		Thus, we have \( \hat{g}\pi^{-1} = \hat{h}^{-1} \circ \hat{g}' \circ \hat{h}\mu'^{-1} \). Since \( \hat{g} \) is uniform, it follows that \( \hat{g}\pi^{-1} = \hat{g} \), leading to
		\[
		\hat{g} = \hat{h}^{-1} \circ \hat{g}' \circ \hat{h}\mu'^{-1}.
		\]
		Writing \( \hat{g}' = (g_1, g_2, \dots, g_n) \) and \( \hat{h} = (h_1, h_2, \dots, h_n) \), we obtain for each \( i \):
		\[
		g = h_i^{-1} g_i h_{\mu'^{-1}(i)}.
		\]
		Let \( C = (c_1, c_2, \dots, c_m) \) be a cycle in the cycle decomposition of \( \mu' \). For all \( i \in \{1, 2, \dots, m\} \), we have
		\[
		g = h_{c_i}^{-1} g_{c_i} h_{c_{i-1}},
		\]
		where \( c_0 = c_m \). Rearranging gives
		\[
		g_{c_i} = h_{c_i}g h_{c_{i-1}}^{-1}.
		\]
		Thus,
		\[
		\prod_{i=m}^{1} g_{c_i} =(h_{c_m}g h_{c_{m-1}}^{-1})(h_{c_{m-1}}g h_{c_{m-2}}^{-1})\cdots  (h_{c_1}g h_{c_{m}}^{-1})=h_{c_m} g^m h_{c_m}^{-1},
		\]
		and the conclusion holds because conjugation preserves order in groups.
		
		(2) \(\Rightarrow\) (3): Suppose \( D' \) is self-\(\hat{g}'\) via \( \mu' = \pi\mu\pi^{-1} \).
		Define \( D'' = (\mathbf{1}, \pi^{-1})D' \). By Theorem~\ref{near conjugate}, \( D'' \) is
		self-\(\hat{g}'\pi\) via \( \mu \). For each cycle \( C = (c_1, c_2, \dots, c_m) \) in the
		cycle decomposition of \( \mu \), since \( \mu' = \pi\mu\pi^{-1} \), there exists a
		corresponding cycle \( C' = (\pi(c_1), \pi(c_2), \dots, \pi(c_m)) \) in the cycle
		decomposition of \( \mu' \). Therefore,
		\[
		\left| \prod_{i=m}^{1} (\hat{g}'\pi)(c_i) \right| =
		\left| \prod_{i=m}^{1} \hat{g}'(\pi(c_i)) \right| =
		\left| \prod_{i=m}^{1} g_{\pi(c_i)} \right| = |g^m|,
		\]
		and conclusion (3) holds.
		
		(3) \(\Rightarrow\) (1): We aim to construct a \( \hat{h} \in \mathcal{G}^n \) such that
		\[
		(\hat{h},\iota)(\hat{g}',\mu)(\hat{h},\iota)^{-1} = (\hat{g},\mu).
		\]
		Then by setting \( D = (\hat{h},\iota)D' \), we obtain
		\[
		(\hat{g},\mu)D = (\hat{g},\mu)(\hat{h},\iota)D' = (\hat{h},\iota)(\hat{g}',\mu)D' = (\hat{h},\iota)D' = D,
		\]
		hence (1) holds.
		
		For each cycle \( C = (c_1, c_2, \dots, c_m) \) in the cycle decomposition of \( \mu \), since
		\[
		\left| \prod_{i=m}^{1} g_{c_i} \right| = |g^m|
		\]
		and \( \prod_{i=m}^{1} g_{c_i}, g^m \in \mathcal{G} \cong S_3 \), then \( \prod_{i=m}^{1} g_{c_i} \) and \( g^m \) are conjugate (as elements of the same order in \( S_3 \) are conjugate). Thus, there exists an \( h_{c_m} \in \mathcal{G} \) such that
		\begin{align}\label{gm}
			h_{c_m} \left( \prod_{i=m}^{1} g_{c_i} \right) h_{c_m}^{-1} = g^m.
		\end{align}
		
		For \( 1 \leq i < m \), recursively define
		\begin{align}\label{eq}
			h_{c_i} = g^{-1} h_{c_{i+1}} g_{c_{i+1}}.
		\end{align}
		By induction, this recurrence relation yields
		\[
		h_{c_i} = g^{-(m-i)} h_{c_m} \left( \prod_{j=m}^{i+1} g_{c_j} \right).
		\]
		In particular, setting \( i = 1 \) and using the identity
		\[
		h_{c_m} = g^m h_{c_m} \left( \prod_{i=1}^{m} g_{c_i}^{-1} \right)
		\]
		(which follows from Equation (\ref{gm})), we derive
		\[
		h_{c_1} = g^{-(m-1)} h_{c_m} \prod_{j=m}^{2} g_{c_j} = g h_{c_m} g_{c_1}^{-1}.
		\]
		Rearranging this relation yields
		\begin{align}\label{m}
			g = h_{c_1} g_{c_1} h_{c_m}^{-1} = h_{c_1} g_{c_1} h_{\mu^{-1}(c_1)}^{-1}.
		\end{align}
		
		Then, by combining Equations (\ref{eq}) and (\ref{m}), we find that for all \( i \), the following holds:
		\begin{align}\label{element}
			g = h_{c_{i+1}} g_{c_{i+1}} h_{c_i}^{-1} = h_{c_{i+1}} g_{c_{i+1}} h_{\mu^{-1}(c_{i+1})}^{-1}.
		\end{align}
		
		Due to the arbitrariness of the cycle, we can choose suitable \( h_{c_m} \) for each cycle \( C = (c_1, c_2, \dots, c_m) \) in \( \mu \). Thus Equation (\ref{element}) holds for all entries, which implies
		\[
		\hat{g} = \hat{h} \circ \hat{g}' \circ \hat{h}^{-1}\mu^{-1}.
		\]
		Therefore, we have
		\[
		\begin{aligned}
			(\hat{h},\iota)(\hat{g}',\mu)(\hat{h},\iota)^{-1}
			&= (\hat{h} \circ \hat{g}', \mu)(\hat{h}^{-1}, \iota) \\
			&= \left( \hat{h} \circ \hat{g}' \circ \hat{h}^{-1}\mu^{-1}, \mu \right) \\
			&= (\hat{g}, \mu).
		\end{aligned}
		\]
	\end{proof}
	
	Specializing the theorem, we obtain the following corollary.
	
	\begin{corollary}\label{Orb_1}
		Let \( S \) be a set system. Then the following are equivalent:
		\begin{itemize}
			\item[$(1')$] There exists a set system \( D \in \operatorname{Orb}_\iota(S) \) that is canonically self-\(\hat{g} = (g,g,\dots,g)\), where \( |g| = 2 \) (respectively, \( |g| = 3 \));
			\item[$(2')$] For every set system \( D' \in \operatorname{Orb}_\iota(S) \), \( D' \) is canonically self-\(\hat{g}' = (g_1,g_2,\dots,g_n)\), where \( |g_i| = 2 \) (respectively, \( |g_i| = 3 \)) for all \( i \);
			\item[$(3')$] There exists a set system \( D' \in \operatorname{Orb}_\iota(S) \) that is canonically self-\(\hat{g}' = (g_1,g_2,\dots,g_n)\), where \( |g_i| = 2 \) (respectively, \( |g_i| = 3 \)) for all \( i \).
		\end{itemize}
		
		\begin{proof}
			Setting \( \mu = \iota \), the cycle decomposition of \( \iota \) is given by the product of \( n \) disjoint 1-cycles: \( (1)(2)\cdots(n) \). Since \( \operatorname{Orb}_\iota(S) \subseteq \operatorname{Orb}(S) \), the following implications hold by Theorem~\ref{Orb}:
			\[
			(1') \Rightarrow (1) \Rightarrow (2) \Rightarrow (2') \Rightarrow (3').
			\]
			Thus, it remains to prove \( (3') \Rightarrow (1') \). In the proof of Theorem~\ref{Orb} (3) \(\Rightarrow\) (1), we set \( D = (\hat{h},\iota)D' \). Since \( D' \in \operatorname{Orb}_\iota(S) \), it follows that \( D \in \operatorname{Orb}_\iota(S) \). Therefore, the conclusion holds.
		\end{proof}
	\end{corollary}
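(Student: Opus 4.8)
The plan is to deduce the corollary from Theorem~\ref{Orb} by specializing to $\mu = \iota$ and keeping track of which arguments stay inside the subgroup $\mathcal{G}^n \rtimes_\phi \{\iota\}$. First I would note that the cycle decomposition of the identity permutation on $[n]$ is the product of $n$ disjoint $1$-cycles $(1)(2)\cdots(n)$; hence for a $1$-cycle $C=(c_1)$ the order condition $\left|\prod_{i=m}^{1} g_{c_i}\right| = |g^m|$ appearing in Theorem~\ref{Orb} degenerates to $|g_{c_1}| = |g|$. Since $\mathcal{G}\cong S_3$ has non-identity elements only of orders $2$ and $3$, this is precisely the condition on $\hat{g}'$ that occurs in $(1')$, $(2')$ and $(3')$.

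Next, using that being canonically self-$\hat{g}$ is exactly the case $\mu=\iota$ of being self-$\hat{g}$ via $\mu$, and that $\operatorname{Orb}_\iota(S) \subseteq \operatorname{Orb}(S)$, I would obtain the chain $(1') \Rightarrow (1) \Rightarrow (2) \Rightarrow (2') \Rightarrow (3')$ directly from Theorem~\ref{Orb}. The only points to watch are that with $\mu=\iota$ the conjugate permutation $\mu' = \pi\iota\pi^{-1}$ is again $\iota$, so ``self-$\hat{g}'$ via $\mu'$'' reads ``canonically self-$\hat{g}'$'' throughout, and that the existential step $(2')\Rightarrow(3')$ uses $\operatorname{Orb}_\iota(S)\neq\varnothing$ (it contains $S$).

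The only implication requiring real content is $(3')\Rightarrow(1')$, and for this I would reuse the explicit construction from the proof of Theorem~\ref{Orb}, direction $(3)\Rightarrow(1)$. Given $D'\in\operatorname{Orb}_\iota(S)$ canonically self-$\hat{g}'$ with every $|g_i|=|g|$, that construction (for each singleton cycle it only picks $h_{c_1}\in\mathcal{G}$ via the $S_3$ fact that elements of equal order are conjugate, and never involves the permutation) produces $\hat{h}\in\mathcal{G}^n$ with $(\hat{h},\iota)(\hat{g}',\iota)(\hat{h},\iota)^{-1} = (\hat{g},\iota)$, where $\hat{g}=(g,\dots,g)$, and sets $D=(\hat{h},\iota)D'$. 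Since $(\hat{h},\iota)$ lies in $\mathcal{G}^n\rtimes_\phi\{\iota\}$, we get $D\in\operatorname{Orb}_\iota(S)$, and $(\hat{g},\iota)D=D$; that is, $D$ is canonically self-$\hat{g}$ with $\hat{g}$ uniform of order $|g|$, which is $(1')$.

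The main (and only minor) obstacle is bookkeeping: one must verify that every piece of Theorem~\ref{Orb} invoked in the direction $(3')\Rightarrow(1')$ uses only permutation-free group elements, so that $\operatorname{Orb}_\iota(S)$ --- rather than merely $\operatorname{Orb}(S)$ --- is preserved. Because the recursion in Theorem~\ref{Orb} for a single $1$-cycle defines $h_{c_1}$ purely from an order/conjugacy argument inside $S_3$ and touches no permutation, this check is immediate, and the corollary follows.
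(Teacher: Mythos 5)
Your proposal is correct and follows essentially the same route as the paper: specialize Theorem~\ref{Orb} to $\mu=\iota$ (so the order condition on $1$-cycles reads $|g_i|=|g|$), obtain $(1')\Rightarrow(1)\Rightarrow(2)\Rightarrow(2')\Rightarrow(3')$ from the inclusion $\operatorname{Orb}_\iota(S)\subseteq\operatorname{Orb}(S)$, and close the loop $(3')\Rightarrow(1')$ by observing that the construction in the proof of Theorem~\ref{Orb} $(3)\Rightarrow(1)$ uses only $(\hat{h},\iota)$ and therefore stays inside $\operatorname{Orb}_\iota(S)$. Your extra bookkeeping about why the permutation never enters the $(3)\Rightarrow(1)$ construction is exactly the point the paper's proof relies on.
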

	
	For a set system \( D \) that is self-\(\hat{g}\) via \( \mu \), Theorem~\ref{Orb} and Corollary~\ref{Orb_1} state that in some cases, analyzing the relationship between the orders of the \( g_i \) and the cycle decomposition of \( \mu \) suffices to determine whether there exists a self-twual set system in the orbit of \( D \).
	
	\begin{example}
		Let \( D = ([3], \{\{3\}, \{1,3\}, \{2,3\}\}) \). It is easy to verify that \( D \) is self-\(\hat{g}\) with \( \hat{g} = (\ast, +, +) \) via \( \iota \). The cycle decomposition of \( \iota \) is \( (1)(2)(3) \), and both \( \ast \) and \( + \) have order 2. Thus, there exists a set system \( D' \in \operatorname{Orb}_\iota(D) \) such that \(  D' \) is self-twual. In fact, let
		\[
		D' = D + 1 \ast 2 \ast 3 = ([3], \{\emptyset, \{1\}, \{2\}\})
		\]
		and \( \hat{g}' = (\overline{\ast}, \overline{\ast}, \overline{\ast}) \). Then
		\[
		(\hat{g}',\iota)D' = D' \overline{\ast} \{1,2,3\} = D',
		\]
		so \( D' \) is self-\((\overline{\ast}, \overline{\ast}, \overline{\ast})\) via \( \iota \).
	\end{example}
	
	\begin{remark}
		
		Abrams and Ellis-Monaghan \cite{Abrams2022} showed that the orbit of a connected ribbon graph must contains an orientable bouquet, which is a ribbon graph with a single vertex, and prove that the (partial) twuality properties of the bouquet propagate throughout its orbit.
		In fact, this result can be generalized to vf-safe delta-matroids. We only need to demonstrate that the orbit of a vf-safe delta-matroid contains a vf-safe delta-matroid with only orientable ribbon loops. To elaborate, for a vf-safe delta-matroid \( D = ([n], \mathcal{F}) \), let \( F \in \mathcal{F}_{\min}(D) \) (where \( F \) may be the empty set). Then \( D \ast F \) is normal, so each element in \( D \ast F \) is a ribbon loop. Let \( \{i_1\}, \{i_2\}, \dots, \{i_k\} \) denote all feasible sets of size \( 1 \) in \( D \ast F \); it follows that \( D' = D \ast F + \{i_1\} + \{i_2\} + \dots + \{i_k\} \) is normal with no feasible sets of size \( 1 \). Thus, \( D' \in \operatorname{Orb}(D) \), and for any \( i \in [n] \), \( i \) is an orientable ribbon loop of \( D' \).
	\end{remark}

	\section{Orbits characterization via multimatroids}\label{5}
	
	In this section, for a vf-safe delta-matroid \( D \), we study the orbit of \( D \) using multimatroids, which were introduced by Bouchet \cite{Bouchet1997}.
	
	A \emph{carrier} is a pair \( (U, \Omega) \), where \( \Omega \) is a partition of a finite set \( U \). An \((n,k)\)-carrier is a carrier \( (U, \Omega) \) such that \( |\Omega| = n \) and for all \( \omega \in \Omega \), \( |\omega| = k \). Each \( \omega \in \Omega \) is called a \emph{skew class}, and a subset \( p \subseteq \omega \) with \( |p| = 2 \) is called a \emph{skew pair} of \( \omega \). A \emph{transversal} (respectively, \emph{subtransversal}) \( T \) of \( \Omega \) is a subset of \( U \) such that \( |T \cap \omega| = 1 \) (respectively, \( |T \cap \omega| \leq 1 \)) for all \( \omega \in \Omega \). The set of all transversals (respectively, subtransversals) of \( \Omega \) is denoted by \( \mathcal{T}(\Omega) \) (respectively, \( \mathcal{S}(\Omega) \)).
	
	\begin{definition}[\cite{Bouchet1997}]
		\normalfont
		A \emph{multimatroid} \( Z \) (described by its independent sets) is a triple \( (U, \Omega, \mathcal{I}) \), where \( (U, \Omega) \) is a carrier and \( \mathcal{I} \subseteq \mathcal{S}(\Omega) \) satisfies the following axioms:
		\begin{itemize}
			\item[(1)] For each transversal \( T \in \mathcal{T}(\Omega) \), \( (T, \mathcal{I} \cap 2^T) \) is a matroid (described by its independent sets);
			\item[(2)] For any \( I \in \mathcal{I} \) and any skew pair \( p = \{x, y\} \) of some skew class \( \omega \in \Omega \) with \( \omega \cap I = \emptyset \), \(I\cup x\in \mathcal{I}\) or \(I\cup y\in \mathcal{I}\).
		\end{itemize}
	\end{definition}
	
	Each \( I \in \mathcal{I} \) of \( Z \) is referred to as an \emph{independent set} of \( Z \).
	The set of maximal independent sets \( \max(\mathcal{I}) \) of \( Z \) (with respect to inclusion)
	form the set of \emph{bases}, denoted by \( \mathcal{B}_Z \). We define
	\[ \mathcal{B}_Z^* := \{ I \mid I \subseteq B \in \mathcal{B}_Z \}. \]
	For any \( B \in \mathcal{B}_Z \), since \( B \in \mathcal{I} \cap 2^B \),
	\( (B, \mathcal{I} \cap 2^B) \) is a matroid (described by its independent sets)
	if and only if for any \( I \subseteq B \), we have \( I \in \mathcal{I} \).
	Thus, \( \mathcal{I} = \mathcal{B}_Z^* \).
	Therefore, for a fixed carrier, the set of bases uniquely determine \( Z \).
	
	For any subset \( X \subseteq U \), the \emph{restriction} of \( Z \) to \( X \),
	denoted by \( Z[X] \), is the multimatroid \( (X, \Omega', \mathcal{I} \cap 2^X) \),
	where \[ \Omega' = \{\omega \cap X \mid \omega \cap X \neq \emptyset, \omega \in \Omega\}. \]

	A \emph{$k$-matroid} \( Z \) is a multimatroid with carrier \( (U, \Omega) \) such that
	\( |\omega| = k \) for all \( \omega \in \Omega \). For \( k>1 \) and any \( k \)-matroid \( Z \),
	the base family \( \mathcal{B}_Z \) is necessarily contained in \( \mathcal{T}(\Omega) \). For \( k > 1 \), a \( k \)-matroid \( Z \) is \emph{tight} if for every basis \( X \in \mathcal{B}_Z \)
	and every skew class \( \omega \in \Omega \), exactly one of the transversals
	\( (X \setminus \omega) \cup \{u\} \) for \( u \in \omega \) is not a basis of \( Z \).
	
	We recall from \cite{Bouchet1987}, \cite{Bouchet2001}, and \cite{Brijder2014} that
	\( 2 \)-matroids correspond to delta-matroids, tight \( 2 \)-matroids correspond to
	even delta-matroids, and tight \( 3 \)-matroids correspond to vf-safe delta-matroids.
	
	Let \( (U, \Omega) \) be an \( (n,k) \)-carrier. A \emph{transversal \(k\)-tuple} of \( (U, \Omega) \)  is a sequence \( (T_1, T_2, \dots, T_k) \) of mutually disjoint transversals of \( \Omega \)
	such that \( \bigcup_{i=1}^{k} T_i = U \). A \emph{projection} of \( (U, \Omega) \) is a surjective function \( \sigma: U \rightarrow [n] \)
	such that for any \( x, y \in U \), \( \sigma(x) = \sigma(y) \) if and only if \( x \) and \( y \)
	belong to the same skew class \( \omega \in \Omega \).
	
	Let \( Z \) be a \( 2 \)-matroid with an \( (n,2) \)-carrier \( (U, \Omega) \).
	For a transversal \( 2 \)-tuple \( \tau = (T_1, T_2) \) of \( (U, \Omega) \) and a projection
	\( \sigma: U \rightarrow [n] \), the set system \( ([n], \mathcal{F}) \) with
	\[
	\mathcal{F} = \{\sigma(X \cap T_2) \mid X \in \mathcal{B}_Z\}
	\]
	is a delta-matroid, denoted by \( D_{Z,\tau,\sigma} \); see \cite{Brijder2014}.
	
	We start from a tight \( 3 \)-matroid and obtain a unique vf-safe delta-matroid via a given transversal
	\( 3 \)-tuple \( \tau \) and a projection \( \sigma \).
	
	Let \( Z = (U, \Omega, \mathcal{I}) \) be a tight \( 3 \)-matroid with an \( (n,3) \)-carrier \( (U, \Omega) \).
	Fix a transversal \( 3 \)-tuple \( \tau = (T_1, T_2, T_3) \) of \( (U, \Omega) \) and a projection
	\( \sigma: U \rightarrow [n] \). Since the restriction \( Z[T_1 \cup T_2] \) is a \( 2 \)-matroid,
	the delta-matroid \( D_{Z[T_1 \cup T_2], (T_1, T_2), \sigma} \) is well-defined. We denote this
	delta-matroid simply by \( D_{Z,\tau,\sigma} \). Brijder and Hoogeboom showed in \cite{Brijder2014}
	that \( D_{Z,\tau,\sigma} \) is vf-safe.
	
	Note that the set of bases of \( Z[T_1 \cup T_2] \), denoted by \( \mathcal{B}_{Z[T_1 \cup T_2]} \), is
	\[
	\mathcal{B}_{Z[T_1 \cup T_2]} = \{ B \mid B \in \max(\mathcal{I} \cap 2^{T_1 \cup T_2}) \}.
	\]
	Since \( Z[T_1 \cup T_2] \) is a \( 2 \)-matroid, for any \( I \in \mathcal{I} \cap 2^{T_1 \cup T_2} \)
	and any skew pair \( p = \{x, y\} \subseteq T_1 \cup T_2 \) of some \( \omega \in \Omega \) with
	\( \omega \cap I = \emptyset \), we have \( I \cup x \in \mathcal{I} \) or \( I \cup y \in \mathcal{I} \).
	Consequently, the set of maximal independent sets of \( Z[T_1 \cup T_2] \) satisfies:
	\[
	\max(\mathcal{I} \cap 2^{T_1 \cup T_2}) = \{ B \in \mathcal{B}_Z \mid B \subseteq T_1 \cup T_2 \}.
	\]
	Thus, the base family of the restricted multimatroid \( Z[T_1 \cup T_2] \) is given by:
	\begin{align}\label{restrict}
		\mathcal{B}_{Z[T_1 \cup T_2]} = \{ B \in \mathcal{B}_Z \mid B \subseteq T_1 \cup T_2 \}.
	\end{align}
	Therefore, \( D_{Z,\tau,\sigma} = ([n], \mathcal{F}) \) with
	\[
	\mathcal{F} = \{ \sigma(X \cap T_2) \mid X \in \mathcal{B}_{Z[T_1 \cup T_2]} \}
	= \{ \sigma(X \cap T_2) \mid X \in \mathcal{B}_Z, X \subseteq T_1 \cup T_2 \}.
	\]
	
	Conversely, we consider a vf-safe delta-matroid and construct a unique tight \( 3 \)-matroid
	via a given transversal \( 3 \)-tuple \( \tau \) and a projection \( \sigma \).
	
	Let \( D = ([n], \mathcal{F}) \) be a vf-safe delta-matroid. Fix a transversal \( 3 \)-tuple
	\( \tau = (T_1, T_2, T_3) \) of \( (U, \Omega) \) and a projection \( \sigma: U \rightarrow [n] \).
	We define
	\[
	\mathcal{B}_{D,\tau,\sigma} := \{ B \in \mathcal{T}(\Omega) \mid
	\sigma(B \cap T_2) \in \mathcal{F}(D \overline{\ast} \sigma(B \cap T_3)) \}.
	\]
	We denote by \( Z_{D,\tau,\sigma} = (U, \Omega, \mathcal{B}_{D,\tau,\sigma}^*) \) the tight
	\( 3 \)-matroid constructed by Brijder and Hoogeboom in \cite{Brijder2014}. We refer to this
	tight \( 3 \)-matroid as the \emph{lift} of \( D \) with respect to \( \tau \) and \( \sigma \).
	
	For an \( (n,3) \)-carrier \( (U, \Omega) \), a transversal \( 3 \)-tuple \( \tau = (T_1, T_2, T_3) \),
	and a projection \( \sigma: U \rightarrow [n] \), let \( i \in [n] \) and \( \omega = \sigma^{-1}(i) \).
	For \( k \in \{1,2,3\} \), let \( p_k \subseteq \omega \) be the skew pair of \( \omega \) with
	\( p_k \cap T_k = \emptyset \). Then we define:
	\begin{align}
		\tau \ast i &= (T_1 \triangle p_3, T_2 \triangle p_3, T_3) \\
		\tau + i &= (T_1, T_2 \triangle p_1, T_3 \triangle p_1) \\
		\tau \overline{\ast} i &= (T_1 \triangle p_2, T_2, T_3 \triangle p_2)
	\end{align}
	In particular, \( \tau \overline{\ast} i = ((\tau + i) \ast i) + i \). Since these operations commute
	on distinct elements of \( [n] \), we can write, for \( I \subseteq [n] \), e.g., \( \tau \ast I \)
	to denote the application of \( \ast i \) for all \( i \in I \) in arbitrary order.
	
	Let \( \mathbb{T}_{(U,\Omega)} \) denote the set of transversal \( 3 \)-tuples of \( (U, \Omega) \).
	Note that for any \( \tau, \tau' \in \mathbb{T}_{(U,\Omega)} \), there exists a sequence of operations
	\( \Gamma = g_1 1 g_2 2 \dots g_n n \), where \( g_i \in \{\mathbf{1}, \ast, +, \ast+, +\ast, \overline{\ast}\} \),
	such that \( \tau' = \tau \Gamma = \tau g_1 1 g_2 2 \dots g_n n \). Similarly, let \( \mathbb{P}_{(U,\Omega)} \) denote the set of projections of \( (U, \Omega) \).
	
	\begin{lemma}[\cite{Brijder2014}]\label{tuple}
		For a vf-safe delta-matroid \( D = ([n], \mathcal{F}) \), an \( (n,3) \)-carrier \( (U, \Omega) \),
		a transversal \( 3 \)-tuple \( \tau \in \mathbb{T}_{(U,\Omega)} \), a projection \( \sigma \in \mathbb{P}_{(U,\Omega)} \),
		and an element \( i \in [n] \), we have
		\[
		Z_{D,\tau,\sigma} = Z_{D+i,\tau+i,\sigma} = Z_{D \ast i,\tau \ast i,\sigma}.\]
	\end{lemma}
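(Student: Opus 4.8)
The plan is to use that a tight $3$-matroid with a given $(n,3)$-carrier is completely determined by its base family (recall from above that $\mathcal{I}=\mathcal{B}_Z^{*}$, so the bases determine $Z$). Since $Z_{D,\tau,\sigma}$, $Z_{D+i,\tau+i,\sigma}$ and $Z_{D\ast i,\tau\ast i,\sigma}$ all have the same carrier $(U,\Omega)$ (only $\tau$ is modified, and $\tau+i$, $\tau\ast i$ are still transversal $3$-tuples), it suffices to prove
\[
\mathcal{B}_{D,\tau,\sigma}=\mathcal{B}_{D+i,\tau+i,\sigma}\quad\text{and}\quad\mathcal{B}_{D,\tau,\sigma}=\mathcal{B}_{D\ast i,\tau\ast i,\sigma}.
\]
First I would fix a transversal $B\in\mathcal{T}(\Omega)$ and write $S_k:=\sigma(B\cap T_k)$ for $k=1,2,3$; since $\tau$ is a transversal $3$-tuple, $\{S_1,S_2,S_3\}$ is a partition of $[n]$. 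Then I would test membership of $B$ in each base family, splitting into the three cases determined by which of the three elements of the skew class $\omega:=\sigma^{-1}(i)$ equals $B\cap\omega$, equivalently whether $i\in S_1$, $i\in S_2$, or $i\in S_3$.

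The key structural observation is that $\tau+i=(T_1,T_2\triangle p_1,T_3\triangle p_1)$ and $\tau\ast i=(T_1\triangle p_3,T_2\triangle p_3,T_3)$ modify the transversals only inside the single skew class $\omega$, with $p_1,p_3\subseteq\omega$. Hence for every $j\neq i$ the representative $B\cap\sigma^{-1}(j)$ keeps its $T_k$-membership, so $\sigma(B\cap T_k')$ and $S_k$ can differ only in whether they contain $i$; and reading off the displays for $\tau\ast i$ and $\tau+i$, passing from $\tau$ to $\tau+i$ interchanges the $T_2$- and $T_3$-representatives of $\omega$ (so $S_1$ is unchanged, and $(S_2,S_3)$ becomes $(S_2,S_3)$, $(S_2\setminus i,\,S_3\cup i)$, or $(S_2\cup i,\,S_3\setminus i)$ according as $i\in S_1$, $i\in S_2$, or $i\in S_3$), while passing from $\tau$ to $\tau\ast i$ interchanges the $T_1$- and $T_2$-representatives of $\omega$ (so $S_3$ is unchanged, and $S_2$ gains $i$, loses $i$, or is unchanged according as $i\in S_1$, $i\in S_2$, or $i\in S_3$).

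With this bookkeeping the argument reduces to a finite case check. For $\mathcal{B}_{D,\tau,\sigma}=\mathcal{B}_{D+i,\tau+i,\sigma}$ one rewrites the target condition $\sigma(B\cap(T_2\triangle p_1))\in\mathcal{F}\bigl((D+i)\,\overline{\ast}\,\sigma(B\cap(T_3\triangle p_1))\bigr)$ using three ingredients: invertible vertex flips on distinct elements commute (so when $i$ lies outside the $\overline{\ast}$-set the operation at $i$ can be pulled out); the identities $\mathcal{G}\cong S_3$ supplies at the single element $i$, such as $\overline{\ast}=+\ast+$ and the consequent $(D+i)\overline{\ast}i=(D\ast i)+i$; and the explicit ``odd-count'' descriptions of $+i$ and $\overline{\ast}i$ on feasible sets from Sections~\ref{2}--\ref{3}. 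In the cases $i\in S_1$ and $i\in S_2$ the operations at $i$ collapse to a single twist or loop complementation that commutes with $\overline{\ast}S_3$, and the equivalence with $S_2\in\mathcal{F}(D\,\overline{\ast}\,S_3)$ is immediate, using only that $A\in\mathcal{F}(E+i)\iff A\in\mathcal{F}(E)$ when $i\notin A$, and $\mathcal{F}(E\ast i)=\mathcal{F}(E)\ast\{i\}$. The substantive case is $i\in S_3$: there $(D+i)\overline{\ast}(S_3\setminus i)$ and $D\,\overline{\ast}\,S_3=(D\,\overline{\ast}\,(S_3\setminus i))\,\overline{\ast}\,i$ are compared via the parity description of $\overline{\ast}i$, which turns both feasibility tests into the same exclusive-or of two conditions on $\mathcal{F}(D\,\overline{\ast}\,(S_3\setminus i))$. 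The equality $\mathcal{B}_{D,\tau,\sigma}=\mathcal{B}_{D\ast i,\tau\ast i,\sigma}$ follows by the identical argument with $T_2\leftrightarrow T_3$ and $+\leftrightarrow\ast$ interchanged (using $(D\ast i)\overline{\ast}i=(D+i)\ast i$), the substantive case again being $i\in S_3$. Assembling the two base-family equalities gives $Z_{D,\tau,\sigma}=Z_{D+i,\tau+i,\sigma}=Z_{D\ast i,\tau\ast i,\sigma}$.

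I expect the only real obstacle to be the case $i\in S_3$: getting the order-of-application convention right when composing vertex flips at the single element $i$ inside $\mathcal{G}\cong S_3$, and then lining up the mod-$2$ bookkeeping of $\overline{\ast}i$ with that of $+i$ (respectively $\ast i$) so that the two exclusive-or identities coincide. Everything else — the reduction to base families, the tracking of $S_2$ and $S_3$ under the $\tau$-operations, and the two commuting cases — is routine; alternatively one may simply quote the statement from Brijder and Hoogeboom~\cite{Brijder2014}.
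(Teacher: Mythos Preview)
The paper does not prove this lemma at all: it is stated with a citation to Brijder and Hoogeboom~\cite{Brijder2014} and used as a black box. Your proposal, by contrast, supplies a direct verification by comparing base families over the common carrier $(U,\Omega)$ and doing a three-way case split on which element of $\omega=\sigma^{-1}(i)$ lies in $B$. That strategy is sound, and your case analysis checks out: for $\tau+i$ the cases $i\in S_1$ and $i\in S_2$ reduce immediately (using that $+i$ fixes feasibility of sets not containing $i$, and $\mathcal{F}(E\ast i)=\mathcal{F}(E)\ast\{i\}$), while for $i\in S_3$ the two membership tests both unwind to the same exclusive-or $\bigl[S_2\in\mathcal{F}(E)\bigr]\oplus\bigl[S_2\cup i\in\mathcal{F}(E)\bigr]$ with $E=D\,\overline{\ast}\,(S_3\setminus i)$.

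One small caution: your claim that the $\ast i$ half follows ``by the identical argument with $T_2\leftrightarrow T_3$ and $+\leftrightarrow\ast$ interchanged'' is not literally a symmetry of the setup. For $\tau\ast i$ the third transversal $T_3$ (and hence $S_3$) is unchanged in all three cases, so the substantive case $i\in S_3$ arises for a different reason than in the $+i$ argument: there $i$ lies inside the $\overline{\ast}$-set, forcing you to compute $(D\ast i)\,\overline{\ast}\,i=(D+i)\ast i$ and then run the same XOR comparison against $\mathcal{F}(E\,\overline{\ast}\,i)$. The computation still goes through exactly as you describe, but it is an analogous argument rather than a formal swap of labels. With that adjustment your sketch is a complete proof; since the paper merely cites the result, your approach is strictly more informative.
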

	
	For convenience, for a delta-matroid \( D \), we denote by \( \mathcal{F}(D) \) the feasible sets
	of \( D \) in the following context. Brijder and Hoogeboom showed in \cite{Brijder2014} that the mapping
	from \( D \) to \( Z_{D,\tau,\sigma} \) is the inverse of the mapping from \( Z \) to \( D_{Z,\tau,\sigma} \),
	which implies \( D = D_{Z_{D,\tau,\sigma},\tau,\sigma} \). For the sake of completeness, the following result
	elaborates on this correspondence in detail.
	
	\begin{lemma}[\cite{Brijder2014}]\label{inverse}
		For a vf-safe delta-matroid \( D \) with ground set \( [n] \), an \( (n,3) \)-carrier \( (U, \Omega) \),
		a transversal \( 3 \)-tuple \( \tau = (T_1, T_2, T_3) \), and a projection \( \sigma \), we have
		\[
		D = D_{Z_{D,\tau,\sigma},\tau,\sigma}.\]
	\end{lemma}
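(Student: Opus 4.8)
The plan is to unwind both constructions involved and verify that the feasible sets of $D_{Z_{D,\tau,\sigma},\tau,\sigma}$ are exactly $\mathcal{F}(D)$. Write $Z = Z_{D,\tau,\sigma} = (U,\Omega,\mathcal{B}_{D,\tau,\sigma}^{*})$. The first step is to pin down the bases of $Z$. By the construction of Brijder and Hoogeboom in \cite{Brijder2014}, $Z$ is a tight $3$-matroid, so by the general discussion preceding the lemma its independent sets are $\mathcal{B}_{D,\tau,\sigma}^{*}$ and $\mathcal{B}_Z = \max(\mathcal{B}_{D,\tau,\sigma}^{*})$. Since every element of $\mathcal{B}_{D,\tau,\sigma}$ is a transversal of $\Omega$, these sets all have cardinality $n$ and are pairwise incomparable, whence $\max(\mathcal{B}_{D,\tau,\sigma}^{*}) = \mathcal{B}_{D,\tau,\sigma}$, i.e.\ $\mathcal{B}_Z = \mathcal{B}_{D,\tau,\sigma}$. (Nonemptiness of $\mathcal{B}_{D,\tau,\sigma}$ will be a byproduct of the bijection constructed below, using that $D$ is proper.)

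Next, by the description of $D_{Z,\tau,\sigma}$ derived above from \eqref{restrict}, namely
\[
\mathcal{F}(D_{Z,\tau,\sigma}) = \{\sigma(X \cap T_2) \mid X \in \mathcal{B}_Z,\ X \subseteq T_1 \cup T_2\},
\]
I would observe that a transversal $X$ of $\Omega$ with $X \subseteq T_1 \cup T_2$ automatically satisfies $X \cap T_3 = \emptyset$, hence $\sigma(X \cap T_3) = \emptyset$ and $D\,\overline{\ast}\,\sigma(X \cap T_3) = D\,\overline{\ast}\,\emptyset = D$. Consequently, for such $X$, membership $X \in \mathcal{B}_{D,\tau,\sigma}$ is equivalent to $\sigma(X \cap T_2) \in \mathcal{F}(D)$, and therefore
\[
\mathcal{F}(D_{Z,\tau,\sigma}) = \{\sigma(X \cap T_2) \mid X \in \mathcal{T}(\Omega),\ X \subseteq T_1 \cup T_2,\ \sigma(X \cap T_2) \in \mathcal{F}(D)\}.
\]

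Finally I would set up the bijection that makes this collapse to $\mathcal{F}(D)$. For each $i \in [n]$ write $\sigma^{-1}(i) = \{a_i, b_i, c_i\}$ with $a_i \in T_1$, $b_i \in T_2$, $c_i \in T_3$. Then $X \mapsto \sigma(X \cap T_2)$ is a bijection from $\{X \in \mathcal{T}(\Omega) \mid X \subseteq T_1 \cup T_2\}$ onto $2^{[n]}$, with inverse $F \mapsto \{b_i \mid i \in F\} \cup \{a_i \mid i \notin F\}$; in particular every $F \in 2^{[n]}$, and hence every $F \in \mathcal{F}(D)$, is realized by some such $X$. Combining this with the previous display gives $\mathcal{F}(D_{Z,\tau,\sigma}) = \mathcal{F}(D)$, which is exactly the claimed identity $D_{Z_{D,\tau,\sigma},\tau,\sigma} = D$.

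The only genuine input, and hence the main obstacle, is the first step: knowing that $Z_{D,\tau,\sigma}$ really is a tight $3$-matroid whose base family equals $\mathcal{B}_{D,\tau,\sigma}$ (rather than a proper subfamily of transversals), which is precisely what we import from \cite{Brijder2014}. Everything after that is bookkeeping with transversals, together with the single useful observation that restricting to bases contained in $T_1 \cup T_2$ trivializes the $\overline{\ast}$-twist appearing in the definition of $\mathcal{B}_{D,\tau,\sigma}$.
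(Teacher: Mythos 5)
Your proof is correct and follows essentially the same route as the paper's: both unwind the definition of $D_{Z,\tau,\sigma}$ via equation \eqref{restrict}, use that $B\subseteq T_1\cup T_2$ forces $\sigma(B\cap T_3)=\emptyset$ so the $\overline{\ast}$-twist disappears, and conclude $\mathcal{F}'=\mathcal{F}(D)$. Your explicit bijection $F\mapsto\{b_i\mid i\in F\}\cup\{a_i\mid i\notin F\}$ just makes precise the surjectivity step that the paper leaves implicit in its final equivalence.
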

	
	\begin{proof}
		Denote the feasible sets of \( D_{Z_{D,\tau,\sigma},\tau,\sigma} \) by \( \mathcal{F}' \).
		Since the ground set of \( D_{Z_{D,\tau,\sigma},\tau,\sigma} \) is \( [n] \), it suffices to show that
		\( \mathcal{F}' = \mathcal{F}(D) \).
		
		The set of bases of \( Z_{D,\tau,\sigma} \) is
		\[
		\mathcal{B}_{D,\tau,\sigma} = \{ B \in \mathcal{T}(\Omega) \mid
		\sigma(B \cap T_2) \in \mathcal{F}(D \overline{\ast} \sigma(B \cap T_3)) \}.
		\]
		
		By equation (\ref{restrict}), the set of bases of \( Z_{D,\tau,\sigma}[T_1 \cup T_2] \) is
		\[
		\mathcal{B}_{Z_{D,\tau,\sigma}[T_1 \cup T_2]} = \{ B \in \mathcal{T}(\Omega) \mid
		B \subseteq T_1 \cup T_2 \text{ and } \sigma(B \cap T_2) \in \mathcal{F}(D) \}.
		\]
		
		Then \( \mathcal{F}' = \{ \sigma(B \cap T_2) \mid B \in \mathcal{B}_{Z_{D,\tau,\sigma}[T_1 \cup T_2]} \} \),
		and we have:
		\[
		\begin{aligned}
			F \in \mathcal{F}' &\Leftrightarrow F = \sigma(B \cap T_2) \text{ for some }
			B \in \mathcal{B}_{Z_{D,\tau,\sigma}[T_1 \cup T_2]} \\
			&\Leftrightarrow F = \sigma(B \cap T_2) \in \mathcal{F}(D) \text{ for some }
			B \in \mathcal{T}(\Omega) \text{ with } B \subseteq T_1 \cup T_2 \\
			&\Leftrightarrow F \in \mathcal{F}(D).
		\end{aligned}
		\]
	\end{proof}
	
	\begin{lemma}\label{trans}
		For a vf-safe delta-matroid \( D \) with ground set \( [n] \), an \( (n,3) \)-carrier \( (U,\Omega) \),
		a transversal \( 3 \)-tuple \( \tau = (T_1,T_2,T_3) \), and a projection \( \sigma \), we have
		\[
		Z_{D_\pi\Gamma(\hat{g}),\tau,\sigma} = Z_{D,\tau\Gamma(\hat{g}),\pi^{-1}\sigma}.
		\]
	\end{lemma}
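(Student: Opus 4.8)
The plan is to split the asserted identity into a \emph{flip part}, describing how $\Gamma(\hat g)$ passes between the delta-matroid slot and the transversal-tuple slot, and a \emph{permutation part}, describing how $\pi$ passes between the delta-matroid slot and the projection slot; I would prove each separately and then compose them.

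For the flip part I would first record, for a single invertible vertex flip $g\in\{\ast,+,\overline{\ast}\}$ and $i\in[n]$, the identity $Z_{D\,g\,i,\tau,\sigma}=Z_{D,\tau\,g\,i,\sigma}$. For $g=\ast$ this is obtained by instantiating Lemma~\ref{tuple} with $D$ replaced by $D\ast i$:
\[
Z_{D\ast i,\tau,\sigma}=Z_{(D\ast i)\ast i,\tau\ast i,\sigma}=Z_{D,\tau\ast i,\sigma},
\]
using that $\ast i$ is an involution on delta-matroids; the case $g=+$ is identical, and $g=\overline{\ast}$ follows by composing these two, since $\overline{\ast}=+\ast+$ on delta-matroids and $\tau\overline{\ast}i=((\tau+i)\ast i)+i$ on transversal $3$-tuples. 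Since invertible vertex flips on distinct elements commute, both on $\mathcal{D}_{(n)}$ and on transversal $3$-tuples, I can iterate this identity over the primitive flips making up $\Gamma(\hat g)=\prod_{i\in[n]}g_i(i)$, moving each in turn from the first argument to the second, and conclude
\[
Z_{D\,\Gamma(\hat g),\tau,\sigma}=Z_{D,\tau\,\Gamma(\hat g),\sigma}
\]
for every $\hat g\in\mathcal{G}^n$ and every transversal $3$-tuple $\tau$.

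For the permutation part I would argue directly with base families, using that a tight $3$-matroid over a fixed carrier is determined by its set of bases. Recall
\[
\mathcal{B}_{D,\tau,\sigma}=\{\,B\in\mathcal{T}(\Omega)\mid\sigma(B\cap T_2)\in\mathcal{F}(D\,\overline{\ast}\,\sigma(B\cap T_3))\,\}.
\]
The key ingredient is the commutation of relabelling with invertible vertex flips used to establish the semidirect-product action in Section~\ref{3}, namely $(D\,g\,i)_\pi=(D_\pi)\,g\,\pi(i)$, whence $D_\pi\,\overline{\ast}\,J=(D\,\overline{\ast}\,\pi^{-1}(J))_\pi$ and therefore $\mathcal{F}(D_\pi\,\overline{\ast}\,J)=\pi\bigl(\mathcal{F}(D\,\overline{\ast}\,\pi^{-1}(J))\bigr)$ for all $J\subseteq[n]$. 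Applying this with $J=\sigma(B\cap T_3)$ and using $\pi^{-1}(\sigma(S))=(\pi^{-1}\sigma)(S)$, the membership condition defining $\mathcal{B}_{D_\pi,\tau,\sigma}$ rewrites as $(\pi^{-1}\sigma)(B\cap T_2)\in\mathcal{F}(D\,\overline{\ast}\,(\pi^{-1}\sigma)(B\cap T_3))$, which is precisely the condition defining $\mathcal{B}_{D,\tau,\pi^{-1}\sigma}$. Together with the facts that $\pi^{-1}\sigma$ is again a projection of $(U,\Omega)$ (it is surjective, and $(\pi^{-1}\sigma)(x)=(\pi^{-1}\sigma)(y)$ iff $\sigma(x)=\sigma(y)$ iff $x,y$ lie in a common skew class) and that $D_\pi$ is vf-safe whenever $D$ is, this gives
\[
Z_{D_\pi,\tau,\sigma}=Z_{D,\tau,\pi^{-1}\sigma}
\]
for every $\pi\in S_n$ and every transversal $3$-tuple $\tau$.

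Finally I would compose the two parts. Since $D_\pi\Gamma(\hat g)=(\hat g,\pi)D$ is obtained from $D$ by first relabelling by $\pi$ and then applying the flips $\Gamma(\hat g)$, applying the flip identity to the delta-matroid $D_\pi$ and then the permutation identity (valid for the arbitrary transversal $3$-tuple $\tau\Gamma(\hat g)$) yields
\[
Z_{D_\pi\Gamma(\hat g),\tau,\sigma}=Z_{D_\pi,\tau\Gamma(\hat g),\sigma}=Z_{D,\tau\Gamma(\hat g),\pi^{-1}\sigma},
\]
which is the claim. I expect the main obstacle to be the bookkeeping in the permutation part — establishing $(D\,g\,i)_\pi=(D_\pi)\,g\,\pi(i)$ and its consequence for $D_\pi\,\overline{\ast}\,J$ cleanly — together with keeping track throughout of which of the three arguments $D$, $\tau$, $\sigma$ each component of a pair $(\hat g,\pi)$ acts on, and with which convention the $\mathcal{G}$-operations compose on transversal $3$-tuples.
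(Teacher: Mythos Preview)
Your proposal is correct and follows essentially the same two-step decomposition as the paper: first reduce to the flip identity $Z_{D\,\Gamma(\hat g),\tau,\sigma}=Z_{D,\tau\,\Gamma(\hat g),\sigma}$ via Lemma~\ref{tuple}, then establish the permutation identity $Z_{D_\pi,\tau,\sigma}=Z_{D,\tau,\pi^{-1}\sigma}$ by showing the two base families coincide, and compose. The only cosmetic difference is in the permutation step: the paper unwinds the parity-count description of $\overline{\ast}$ directly to pass from $F_B\in\mathcal{F}(D_\pi\,\overline{\ast}\,S_B)$ to $\pi^{-1}F_B\in\mathcal{F}(D\,\overline{\ast}\,\pi^{-1}S_B)$, whereas you package the same computation into the relabelling commutation $(D\,g\,i)_\pi=(D_\pi)\,g\,\pi(i)$ and its consequence $\mathcal{F}(D_\pi\,\overline{\ast}\,J)=\pi\bigl(\mathcal{F}(D\,\overline{\ast}\,\pi^{-1}J)\bigr)$; both arrive at the identical base-family equality.
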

	
	\begin{proof}
		By Lemma~\ref{tuple}, we have
		\[
		Z_{D\Gamma(\hat{g}),\tau,\sigma} = Z_{D,\tau\Gamma(\hat{g}),\sigma}.
		\]
		It suffices to show that for any transversal \( 3 \)-tuple \( \tau \) of \( (U,\Omega) \) and
		projection \( \sigma \) of \( (U,\Omega) \),
		\[
		Z_{D_\pi,\tau,\sigma} = Z_{D,\tau,\pi^{-1}\sigma}.
		\]
		By definition of \( \mathcal{B}_{D,\tau,\sigma} \), we have
		\[
		\mathcal{B}_{D_\pi,\tau,\sigma} = \{ B \in \mathcal{T}(\Omega) \mid
		\sigma(B \cap T_2) \in \mathcal{F}(D_\pi \overline{\ast} \sigma(B \cap T_3)) \},
		\]
		and
		\[
		\mathcal{B}_{D,\tau,\pi^{-1}\sigma} = \{ B \in \mathcal{T}(\Omega) \mid
		\pi^{-1}\sigma(B \cap T_2) \in \mathcal{F}(D \overline{\ast} \pi^{-1}\sigma(B \cap T_3)) \}.
		\]
		For any \( B \in \mathcal{T}(\Omega) \), let \( F_B = \sigma(B \cap T_2) \) and
		\( S_B = \sigma(B \cap T_3) \). Then:
		\[
		\begin{aligned}
			B \in \mathcal{B}_{D_\pi,\tau,\sigma}
			&\Leftrightarrow F_B \in \mathcal{F}(D_\pi \overline{\ast} S_B) \\
			&\Leftrightarrow \left| \{ X \in \mathcal{F}(D_\pi) \mid F_B \subseteq X \subseteq F_B \cup S_B \} \right| \text{ is odd} \\
			&\Leftrightarrow \left| \{ \pi(\pi^{-1}X) \in \mathcal{F}(D_\pi) \mid \pi(\pi^{-1}F_B) \subseteq \pi(\pi^{-1}X) \subseteq \pi(\pi^{-1}(F_B \cup S_B)) \} \right| \text{ is odd} \\
			&\Leftrightarrow \left| \{ \pi^{-1}X \in \mathcal{F}(D) \mid \pi^{-1}F_B \subseteq \pi^{-1}X \subseteq \pi^{-1}F_B \cup \pi^{-1}S_B \} \right| \text{ is odd} \\
			&\Leftrightarrow \pi^{-1}F_B \in \mathcal{F}(D \overline{\ast} \pi^{-1}S_B) \\
			&\Leftrightarrow \pi^{-1}\sigma(B \cap T_2) \in \mathcal{F}(D \overline{\ast} \pi^{-1}\sigma(B \cap T_3)) \\
			&\Leftrightarrow B \in \mathcal{B}_{D,\tau,\pi^{-1}\sigma}.
		\end{aligned}
		\]
		Thus, \( \mathcal{B}_{D_\pi,\tau,\sigma} = \mathcal{B}_{D,\tau,\pi^{-1}\sigma} \).
		Combining this with \( Z_{D\Gamma(\hat{g}),\tau,\sigma} = Z_{D,\tau\Gamma(\hat{g}),\sigma} \) yields
		\[
		Z_{D_\pi\Gamma(\hat{g}),\tau,\sigma} = Z_{D,\tau\Gamma(\hat{g}),\pi^{-1}\sigma}.
		\]
	\end{proof}
	
	The following theorem characterizes the elements of the orbit of a vf-safe delta-matroid \( D \):
	they are precisely those vf-safe delta-matroids that can be lifted to the same tight 3-matroid
	via some transversal tuple and projection.
	
	\begin{theorem}\label{orb}
		For a vf-safe delta-matroid \( D \) with ground set \( [n] \), an \( (n,3) \)-carrier \( (U,\Omega) \),
		a transversal \( 3 \)-tuple \( \tau = (T_1,T_2,T_3) \), and a projection \( \sigma \), we have
		\[
		\operatorname{Orb}(D) = \{ D' \mid \exists \tau' \in \mathbb{T}_{(U,\Omega)} \text{ and }
		\sigma' \in \mathbb{P}_{(U,\Omega)} \text{ such that } Z_{D',\tau',\sigma'} = Z_{D,\tau,\sigma} \}.
		\]
	\end{theorem}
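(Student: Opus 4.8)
The plan is to prove the two inclusions separately, using Lemma~\ref{trans} to trade the group action on $D$ for changes in the parameters $(\tau,\sigma)$ of the lift, and Lemma~\ref{inverse} to recover a preimage via injectivity of the lift. Before the main argument I would record three elementary parameter-side facts: (i) applying any operation $\ast i$, $+i$, or $\overline{\ast}i$ to a transversal $3$-tuple again yields a transversal $3$-tuple — a symmetric difference with a skew pair merely exchanges the two elements of one skew class between two of the three transversals — so $\tau\Gamma(\hat g)\in\mathbb T_{(U,\Omega)}$ for every $\hat g\in\mathcal G^n$; (ii) any two projections $\sigma,\sigma'$ of $(U,\Omega)$ have the same fibre partition $\Omega$, hence $\sigma'=\pi\sigma$ for a unique $\pi\in S_n$ (the relabelling of $[n]$ induced by the two bijections $\Omega\to[n]$), and conversely $\pi\sigma\in\mathbb P_{(U,\Omega)}$ for every $\pi\in S_n$; (iii) by Lemma~\ref{inverse}, $E=D_{Z_{E,\tau',\sigma'},\tau',\sigma'}$ for every vf-safe delta-matroid $E$, so the lift $E\mapsto Z_{E,\tau',\sigma'}$ is injective, and since vf-safe delta-matroids are closed under the action, both sides of the asserted identity consist of vf-safe delta-matroids.

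For the inclusion $\subseteq$, I would take $D'=(\hat g,\pi)D$ and use $(\hat g,\pi)^{-1}=(\hat g^{-1}\pi,\pi^{-1})$ to write $D=(\hat g^{-1}\pi,\pi^{-1})D'=D'_{\pi^{-1}}\Gamma(\hat g^{-1}\pi)$. Applying Lemma~\ref{trans} with $D'$, $\pi^{-1}$, $\hat g^{-1}\pi$ in the roles of $D$, $\pi$, $\hat g$ then gives
\[
Z_{D,\tau,\sigma}=Z_{D'_{\pi^{-1}}\Gamma(\hat g^{-1}\pi),\,\tau,\,\sigma}=Z_{D',\,\tau\Gamma(\hat g^{-1}\pi),\,\pi\sigma},
\]
so that $\tau'=\tau\Gamma(\hat g^{-1}\pi)\in\mathbb T_{(U,\Omega)}$ and $\sigma'=\pi\sigma\in\mathbb P_{(U,\Omega)}$ witness $Z_{D',\tau',\sigma'}=Z_{D,\tau,\sigma}$; hence $D'$ lies in the right-hand set.

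For the reverse inclusion, given a vf-safe $D'$ with $Z_{D',\tau',\sigma'}=Z_{D,\tau,\sigma}$ for some $\tau'\in\mathbb T_{(U,\Omega)}$ and $\sigma'\in\mathbb P_{(U,\Omega)}$, I would pick the unique $\pi\in S_n$ with $\sigma'=\pi\sigma$, pick $\hat m\in\mathcal G^n$ with $\tau'=\tau\Gamma(\hat m)$ (possible since $\tau,\tau'$ are transversal $3$-tuples of the same carrier), and then solve $\hat g^{-1}\pi=\hat m$ for $\hat g$, namely set $g_j=m_{\pi^{-1}(j)}^{-1}$. Putting $D''=(\hat g,\pi)D\in\operatorname{Orb}(D)$, the computation of the first inclusion applied to $D''$ yields
\[
Z_{D'',\tau',\sigma'}=Z_{D'',\,\tau\Gamma(\hat g^{-1}\pi),\,\pi\sigma}=Z_{D,\tau,\sigma}=Z_{D',\tau',\sigma'},
\]
whence $D''=D'$ by injectivity of $E\mapsto Z_{E,\tau',\sigma'}$; thus $D'=(\hat g,\pi)D\in\operatorname{Orb}(D)$.

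I expect the main obstacle to be the index bookkeeping inside the semidirect product: keeping straight that the $\mathcal G^n$-component of $(\hat g,\pi)^{-1}$ is the reindexed inverse $\hat g^{-1}\pi$, substituting correctly into Lemma~\ref{trans}, and solving $\hat g^{-1}\pi=\hat m$ for $\hat g$ with $\pi$ and $\hat m$ prescribed. The parameter-side ingredients — invariance of transversal $3$-tuples under $\mathcal G$, two projections differing by a unique permutation, and injectivity of the lift — are routine or already available from \cite{Brijder2014}.
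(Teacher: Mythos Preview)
Your proposal is correct and follows essentially the same approach as the paper: both directions rely on Lemma~\ref{trans} to exchange the group action on $D$ for a change of parameters $(\tau,\sigma)$, and on Lemma~\ref{inverse} to deduce equality of delta-matroids from equality of their lifts. The only cosmetic differences are that for $\subseteq$ the paper applies Lemma~\ref{trans} directly to $D'=D_\pi\Gamma(\hat g)$ with $\tau'=\tau\Gamma(\hat g)^{-1}$, $\sigma'=\pi\sigma$ (rather than first passing through $(\hat g,\pi)^{-1}$), and for $\supseteq$ it shows $D'_\pi\Gamma(\hat g)=D$ directly rather than constructing $D''\in\operatorname{Orb}(D)$ and invoking injectivity; these are equivalent packagings of the same computation.
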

	
	\begin{proof}
		We first prove the inclusion
		\[
		\operatorname{Orb}(D) \subseteq \{ D' \mid \exists \tau' \in \mathbb{T}_{(U,\Omega)} \text{ and }
		\sigma' \in \mathbb{P}_{(U,\Omega)} \text{ such that } Z_{D',\tau',\sigma'} = Z_{D,\tau,\sigma} \}.
		\]
		For any \( D' \in \operatorname{Orb}(D) \), there exists \( (\hat{g},\pi) \in \mathcal{G}^n \rtimes_\phi S_n \)
		such that \( D' = (\hat{g},\pi)D = D_\pi \Gamma(\hat{g}) \). Set \( \tau' = \tau \Gamma^{-1}(\hat{g}) \) and \( \sigma' = \pi \sigma \). Then, by Lemma~\ref{trans},
		\[
		\begin{aligned}
			Z_{D',\tau',\sigma'} &= Z_{D,\tau' \Gamma(\hat{g}), \pi^{-1} \sigma'} \\
			&= Z_{D,\tau, \sigma},
		\end{aligned}
		\]
		which implies \( D' \in \{ D' \mid \exists \tau' \in \mathbb{T}_{(U,\Omega)} \text{ and }
		\sigma' \in \mathbb{P}_{(U,\Omega)} \text{ such that } Z_{D',\tau',\sigma'} = Z_{D,\tau,\sigma} \} \).
		
		Conversely, for any \( D' \in \{ D' \mid \exists \tau' \in \mathbb{T}_{(U,\Omega)} \text{ and }
		\sigma' \in \mathbb{P}_{(U,\Omega)} \text{ such that } Z_{D',\tau',\sigma'} = Z_{D,\tau,\sigma} \} \),
		there exist \( \tau' \) and \( \sigma' \) such that \( Z_{D',\tau',\sigma'} = Z_{D,\tau,\sigma} \).
		
		There exist a sequence \( \Gamma(\hat{g}) \) and a permutation \( \pi \) such that
		\( \tau' = \tau \Gamma(\hat{g}) \) and \( \sigma' = \pi^{-1} \sigma \). Then, by Lemma~\ref{trans},
		\[
		\begin{aligned}
			Z_{D,\tau,\sigma} &= Z_{D',\tau',\sigma'} \\
			&= Z_{D',\tau \Gamma(\hat{g}), \pi^{-1} \sigma} \\
			&= Z_{D'_\pi \Gamma(\hat{g}), \tau, \sigma}.
		\end{aligned}
		\]
		Thus, by Lemma~\ref{inverse},
		\[
		D'_\pi \Gamma(\hat{g}) = D_{Z_{D'_\pi \Gamma(\hat{g}),\tau,\sigma},\tau,\sigma}
		= D_{Z_{D,\tau,\sigma},\tau,\sigma} = D,
		\]
		which implies \( D' \in \operatorname{Orb}(D) \).
	\end{proof}
	
	Through the proof of the above theorem, we can obtain the following corollary by analogy.
	
	\begin{corollary}
		For a vf-safe delta-matroid \( D \) with ground set \( [n] \), an \( (n,3) \)-carrier \( (U,\Omega) \),
		a transversal \( 3 \)-tuple \( \tau \), and a projection \( \sigma \), we have
		\[
		\operatorname{Orb}_\iota(D) = \{ D' \mid \exists \tau' \in \mathbb{T}_{(U,\Omega)} \text{ such that }
		Z_{D',\tau',\sigma} = Z_{D,\tau,\sigma} \}.
		\]
	\end{corollary}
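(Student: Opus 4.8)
The plan is to transcribe the proof of Theorem~\ref{orb} almost verbatim, specializing the permutation to \( \pi = \iota \) throughout so that the projection \( \sigma \) is never altered. We prove the two inclusions separately. For \( \operatorname{Orb}_\iota(D) \subseteq \{ D' \mid \exists\,\tau' \in \mathbb{T}_{(U,\Omega)},\ Z_{D',\tau',\sigma} = Z_{D,\tau,\sigma} \} \), take \( D' \in \operatorname{Orb}_\iota(D) \) and pick \( (\hat{g},\iota) \in \mathcal{G}^n \rtimes_\phi \{\iota\} \) with \( D' = (\hat{g},\iota)D = D\,\Gamma(\hat{g}) \) (using \( D_\iota = D \)). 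Set \( \tau' = \tau\,\Gamma^{-1}(\hat{g}) \) and keep \( \sigma \) fixed. Then Lemma~\ref{trans} with \( \pi = \iota \) gives
\[
Z_{D',\tau',\sigma} = Z_{D\Gamma(\hat{g}),\tau',\sigma} = Z_{D,\tau'\Gamma(\hat{g}),\sigma} = Z_{D,\tau,\sigma},
\]
so \( D' \) lies in the right-hand set.

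For the reverse inclusion, suppose \( D' \) satisfies \( Z_{D',\tau',\sigma} = Z_{D,\tau,\sigma} \) for some \( \tau' \in \mathbb{T}_{(U,\Omega)} \). Since any two transversal \( 3 \)-tuples of the fixed carrier \( (U,\Omega) \) differ by a sequence of twist/loop-complementation operations, write \( \tau' = \tau\,\Gamma(\hat{g}) \) for a suitable \( \hat{g} \in \mathcal{G}^n \). Applying Lemma~\ref{trans} once more with \( \pi = \iota \) yields \( Z_{D,\tau,\sigma} = Z_{D',\tau\Gamma(\hat{g}),\sigma} = Z_{D'\Gamma(\hat{g}),\tau,\sigma} \), and Lemma~\ref{inverse} then gives
\[
D'\Gamma(\hat{g}) = D_{Z_{D'\Gamma(\hat{g}),\tau,\sigma},\tau,\sigma} = D_{Z_{D,\tau,\sigma},\tau,\sigma} = D,
\]
that is, \( (\hat{g},\iota)D' = D \). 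Because \( (\hat{g},\iota)^{-1} = (\hat{g}^{-1},\iota) \) still lies in the subgroup \( \mathcal{G}^n \rtimes_\phi \{\iota\} \), we conclude \( D' = (\hat{g}^{-1},\iota)D \in \operatorname{Orb}_\iota(D) \).

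I expect no substantive obstacle: the entire argument is a direct restriction of the proof of Theorem~\ref{orb}, invoking Lemmas~\ref{trans} and \ref{inverse} exactly as there. The only point needing care is the bookkeeping that forces the permutation to be \( \iota \) at every step — concretely, that the choice \( \sigma' = \pi\sigma \) used in the forward direction of Theorem~\ref{orb} collapses to \( \sigma \) here, and that in the backward direction the statement already fixes \( \sigma \), so the relation \( \tau' = \tau\,\Gamma(\hat g) \) (with no accompanying relabelling) is the correct one to feed into Lemma~\ref{trans}. One should also confirm that the witnessing element \( (\hat g,\iota) \) (and its inverse) genuinely sits in the smaller subgroup, which is immediate from the second coordinate being \( \iota \); this is what upgrades the conclusion from \( \operatorname{Orb}(D) \) to \( \operatorname{Orb}_\iota(D) \).
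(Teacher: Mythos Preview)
Your proposal is correct and matches the paper's approach exactly: the paper does not write out a separate proof for this corollary but simply states that it follows from the proof of Theorem~\ref{orb} by analogy, which is precisely the specialization to \( \pi = \iota \) that you carry out. Your bookkeeping (keeping \(\sigma\) fixed throughout and noting that \((\hat g,\iota)^{-1}=(\hat g^{-1},\iota)\) stays in the subgroup) is exactly what that analogy requires.
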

	
	The following theorem characterizes the orbit of a vf-safe delta-matroid \( D \) as exactly
	all vf-safe delta-matroids obtained from a tight 3-matroid via different tuples and projections.
	
	\begin{theorem}\label{main}
		For a vf-safe delta-matroid \( D \) with ground set \( [n] \), an \( (n,3) \)-carrier \( (U,\Omega) \),
		a transversal \( 3 \)-tuple \( \tau = (T_1,T_2,T_3) \), and a projection \( \sigma \), we have
		\[
		\operatorname{Orb}(D) = \{ D_{Z_{D,\tau,\sigma},\tau',\sigma'} \mid
		\tau' \in \mathbb{T}_{(U,\Omega)},\ \sigma' \in \mathbb{P}_{(U,\Omega)} \}.
		\]
	\end{theorem}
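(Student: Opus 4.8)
The plan is to deduce Theorem~\ref{main} from Theorem~\ref{orb} together with the Brijder--Hoogeboom correspondence recalled before Lemma~\ref{inverse}: for every fixed transversal $3$-tuple $\tau'$ and projection $\sigma'$ on the $(n,3)$-carrier $(U,\Omega)$, the maps $D \mapsto Z_{D,\tau',\sigma'}$ and $Z \mapsto D_{Z,\tau',\sigma'}$ are mutually inverse bijections between vf-safe delta-matroids on $[n]$ and tight $3$-matroids on $(U,\Omega)$. In particular one has both $D = D_{Z_{D,\tau',\sigma'},\tau',\sigma'}$ (this is exactly Lemma~\ref{inverse}) and, in the other direction, $Z = Z_{D_{Z,\tau',\sigma'},\tau',\sigma'}$.

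For the inclusion $\operatorname{Orb}(D) \subseteq \{\, D_{Z_{D,\tau,\sigma},\tau',\sigma'} \mid \tau' \in \mathbb{T}_{(U,\Omega)},\ \sigma' \in \mathbb{P}_{(U,\Omega)} \,\}$, I would take $D' \in \operatorname{Orb}(D)$ and first note that $D'$ is again a vf-safe delta-matroid, since it is obtained from $D$ by a relabeling followed by a sequence of invertible vertex flips, and vf-safe delta-matroids are by definition closed under these operations. Then Theorem~\ref{orb} supplies $\tau' \in \mathbb{T}_{(U,\Omega)}$ and $\sigma' \in \mathbb{P}_{(U,\Omega)}$ with $Z_{D',\tau',\sigma'} = Z_{D,\tau,\sigma}$, and applying Lemma~\ref{inverse} to $D'$ with this very $\tau',\sigma'$ gives $D' = D_{Z_{D',\tau',\sigma'},\tau',\sigma'} = D_{Z_{D,\tau,\sigma},\tau',\sigma'}$, so $D'$ lies in the right-hand side.

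For the reverse inclusion, fix arbitrary $\tau' \in \mathbb{T}_{(U,\Omega)}$ and $\sigma' \in \mathbb{P}_{(U,\Omega)}$ and set $D' := D_{Z_{D,\tau,\sigma},\tau',\sigma'}$, which is vf-safe by Brijder--Hoogeboom. Invoking the $Z \mapsto D \mapsto Z$ half of the inverse correspondence with $\tau',\sigma'$ as labels and $Z = Z_{D,\tau,\sigma}$ yields $Z_{D',\tau',\sigma'} = Z_{D_{Z_{D,\tau,\sigma},\tau',\sigma'},\tau',\sigma'} = Z_{D,\tau,\sigma}$, so $\tau'$ and $\sigma'$ witness, via Theorem~\ref{orb}, that $D' \in \operatorname{Orb}(D)$. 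Taking the union over all $\tau',\sigma'$ closes the argument.

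The proof is essentially bookkeeping layered on top of Theorem~\ref{orb}; the only points needing care are (i) verifying that $D'$ stays vf-safe at each step, so that the lift construction and Lemma~\ref{inverse} apply, and (ii) being precise about which direction of the $D \leftrightarrow Z$ bijection is used in the reverse inclusion, namely $Z = Z_{D_{Z,\cdot,\cdot},\cdot,\cdot}$, which is not literally the statement of Lemma~\ref{inverse} but follows from the mutual-inverse property quoted just before it. I do not anticipate a genuine obstacle.
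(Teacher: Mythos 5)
Your proof is correct, and its overall shape (two inclusions mediated by the $D\leftrightarrow Z$ correspondence) matches the paper's, but the two halves diverge from the paper's proof in instructive ways. For the forward inclusion you cite Theorem~\ref{orb} as a black box to produce $\tau',\sigma'$ with $Z_{D',\tau',\sigma'}=Z_{D,\tau,\sigma}$ and then apply Lemma~\ref{inverse} to $D'$; the paper instead re-runs the computation from Lemma~\ref{trans} with the explicit choices $\tau'=\tau\Gamma(\hat g)^{-1}$, $\sigma'=\pi\sigma$ --- same content, yours is cleaner. The real difference is in the reverse inclusion. You invoke the composition $Z=Z_{D_{Z,\tau',\sigma'},\tau',\sigma'}$, which, as you note, is \emph{not} Lemma~\ref{inverse} but the other half of the Brijder--Hoogeboom mutual-inverse property; the paper states that property in prose (attributing it to \cite{Brijder2014}) but only ever proves and uses the $D\mapsto Z\mapsto D$ direction. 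The paper's proof deliberately avoids the direction you need: it writes $Z_{D,\tau,\sigma}=Z_{D,\tau'\Gamma(\hat g),\pi^{-1}\sigma'}=Z_{D_\pi\Gamma(\hat g),\tau',\sigma'}$ via Lemma~\ref{trans} (choosing $\hat g,\pi$ so that $\tau=\tau'\Gamma(\hat g)$ and $\sigma=\pi^{-1}\sigma'$) and then applies Lemma~\ref{inverse} in its stated direction to conclude $D'=D_\pi\Gamma(\hat g)\in\operatorname{Orb}(D)$. So your argument is sound given the full strength of the cited Brijder--Hoogeboom bijection, but it rests on an external fact the paper's own lemmas do not establish, whereas the paper's route is self-contained modulo Lemma~\ref{inverse} and Lemma~\ref{trans}. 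If you want your version to stand on the same footing, either cite the $Z\mapsto D\mapsto Z$ identity explicitly from \cite{Brijder2014}, or replace that step with the paper's Lemma~\ref{trans} manipulation.
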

	
	\begin{proof}
		For any \( D' \in \operatorname{Orb}(D) \), there exists \( (\hat{g},\pi) \in \mathcal{G}^n \rtimes_\phi S_n \)
		such that \( D' = (\hat{g},\pi)D = D_\pi \Gamma(\hat{g}) \). Set \( \tau' = \tau \Gamma(\hat{g})^{-1} \) and \( \sigma' = \pi \sigma \).
		By Lemma~\ref{trans}, we have
		\[
		Z_{D_\pi \Gamma(\hat{g}), \tau', \sigma'} = Z_{D, \tau' \Gamma(\hat{g}), \pi^{-1} \sigma'}.
		\]
		Then, by Lemma~\ref{inverse},
		\[
		\begin{aligned}
			D_\pi \Gamma(\hat{g}) &= D_{Z_{D_\pi \Gamma(\hat{g}), \tau', \sigma'}, \tau', \sigma'} \\
			&= D_{Z_{D, \tau' \Gamma(\hat{g}), \pi^{-1} \sigma'}, \tau', \sigma'} \\
			&= D_{Z_{D, \tau, \sigma}, \tau', \sigma'}.
		\end{aligned}
		\]
		Thus, \( D' \in \{ D_{Z_{D,\tau,\sigma},\tau',\sigma'} \mid \tau' \in \mathbb{T}_{(U,\Omega)},\ \sigma' \in \mathbb{P}_{(U,\Omega)} \} \).
		
		The reverse inclusion is analogous. For any \( D' \in \{ D_{Z_{D,\tau,\sigma},\tau',\sigma'} \mid \tau' \in \mathbb{T}_{(U,\Omega)},\ \sigma' \in \mathbb{P}_{(U,\Omega)} \} \),
		there exist \( \tau' \) and \( \sigma' \) such that \( D' = D_{Z_{D,\tau,\sigma},\tau',\sigma'} \).
		
		There exist a sequence \( \Gamma(\hat{g}) \) and a permutation \( \pi \) such that
		\( \tau = \tau' \Gamma(\hat{g}) \) and \( \sigma = \pi^{-1} \sigma' \). Then
		\[
		\begin{aligned}
			D' &= D_{Z_{D,\tau,\sigma},\tau',\sigma'} \\
			&= D_{Z_{D,\tau' \Gamma(\hat{g}), \pi^{-1} \sigma'},\tau',\sigma'} \\
			&= D_{Z_{D_\pi \Gamma(\hat{g}), \tau', \sigma'},\tau',\sigma'} \\
			&= D_\pi \Gamma(\hat{g}).
		\end{aligned}
		\]
		Thus, \( D' \in \operatorname{Orb}(D) \).
		
		Combining both inclusions gives
		\[
		\operatorname{Orb}(D) = \{ D_{Z_{D,\tau,\sigma},\tau',\sigma'} \mid \tau' \in \mathbb{T}_{(U,\Omega)},\ \sigma' \in \mathbb{P}_{(U,\Omega)} \}.
		\]
	\end{proof}
	
	\begin{corollary}
		For a vf-safe delta-matroid \( D \) with ground set \( [n] \), an \( (n,3) \)-carrier \( (U,\Omega) \),
		a transversal \( 3 \)-tuple \( \tau \), and a projection \( \sigma \), we have
		\[
		\operatorname{Orb}_\iota(D) = \{ D_{Z,\tau',\sigma} \mid Z = Z_{D,\tau,\sigma},\ \tau' \in \mathbb{T}_{(U,\Omega)} \}.
		\]
	\end{corollary}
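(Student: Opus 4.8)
The plan is to mirror the proof of Theorem~\ref{main}, specializing every permutation to the identity $\iota$. Recall that for $\pi=\iota$ we have $D_\iota=D$, so the action of the subgroup $\mathcal{G}^n\rtimes_\phi\{\iota\}$ reduces to $(\hat{g},\iota)D=D\Gamma(\hat{g})$, and Lemma~\ref{trans} with $\pi=\iota$ becomes simply $Z_{D\Gamma(\hat{g}),\tau,\sigma}=Z_{D,\tau\Gamma(\hat{g}),\sigma}$ (which is in any case immediate from Lemma~\ref{tuple}). Together with Lemma~\ref{inverse}, this is the only machinery from Section~\ref{5} that I would need.

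For the inclusion $\operatorname{Orb}_\iota(D)\subseteq\{D_{Z,\tau',\sigma}\mid Z=Z_{D,\tau,\sigma},\ \tau'\in\mathbb{T}_{(U,\Omega)}\}$, take $D'\in\operatorname{Orb}_\iota(D)$, so $D'=(\hat{g},\iota)D=D\Gamma(\hat{g})$ for some $\hat{g}\in\mathcal{G}^n$. Put $\tau'=\tau\Gamma(\hat{g})^{-1}$, which is again a transversal $3$-tuple of $(U,\Omega)$ since the operations $\ast i$, $+i$, $\overline{\ast}i$ preserve $\mathbb{T}_{(U,\Omega)}$, and keep $\sigma$ fixed. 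Then Lemma~\ref{trans} gives $Z_{D',\tau',\sigma}=Z_{D,\tau'\Gamma(\hat{g}),\sigma}=Z_{D,\tau,\sigma}=:Z$, and Lemma~\ref{inverse} applied to the vf-safe delta-matroid $D'$ yields $D'=D_{Z_{D',\tau',\sigma},\tau',\sigma}=D_{Z,\tau',\sigma}$, as required.

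For the reverse inclusion, let $D'=D_{Z,\tau',\sigma}$ with $Z=Z_{D,\tau,\sigma}$ and $\tau'\in\mathbb{T}_{(U,\Omega)}$. By the remark preceding Lemma~\ref{tuple}, there is a sequence $\Gamma(\hat{g})=g_1 1\cdots g_n n$ with $\tau=\tau'\Gamma(\hat{g})$. Then, combining Lemma~\ref{trans} (in the form above) with Lemma~\ref{inverse} applied to $D\Gamma(\hat{g})$,
\[
D'=D_{Z_{D,\tau,\sigma},\tau',\sigma}=D_{Z_{D,\tau'\Gamma(\hat{g}),\sigma},\tau',\sigma}=D_{Z_{D\Gamma(\hat{g}),\tau',\sigma},\tau',\sigma}=D\Gamma(\hat{g})=(\hat{g},\iota)D,
\]
so $D'\in\operatorname{Orb}_\iota(D)$. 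Combining the two inclusions proves the corollary.

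The computations are entirely routine once the reduction $\pi=\iota$ is in place; the one point I would check carefully — and which is really the whole content of passing from $\operatorname{Orb}$ to $\operatorname{Orb}_\iota$ — is that fixing the projection $\sigma$ throughout is legitimate. In Lemma~\ref{trans} it is exactly the permutation $\pi$ that changes the projection, via $\sigma\mapsto\pi^{-1}\sigma$, so restricting to $\pi=\iota$ forces $\sigma$ to remain unchanged in both directions of the argument. No genuine obstacle is expected beyond keeping this bookkeeping consistent.
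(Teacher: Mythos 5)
Your proof is correct and is exactly the argument the paper intends: the corollary is obtained by specializing the proof of Theorem~\ref{main} to $\pi=\iota$, which (as you rightly flag) is precisely what keeps the projection $\sigma$ fixed on both sides. No issues.
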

	
	\section{Orbits of ribbon-graphic delta-matroids} \label{6}
	
	In this section, we investigate the orbit of ribbon-graphic delta-matroids using a degenerate form of Theorem~\ref{main}. A ribbon graph arises naturally from a classical cellularly embedded graph by taking a small neighborhood, and is formally defined as follows.
	
	\begin{definition}[\cite{Bollobas2002}]
		\normalfont
		A \emph{ribbon graph} \( G = (V(G), E(G)) \) is a surface with boundary, represented as the union of two sets of discs: a set \( V(G) \) of vertices and a set \( E(G) \) of edges, satisfying the following properties:
		\begin{enumerate}
			\item The vertices and edges intersect in disjoint line segments.
			\item Each such line segment lies on the boundary of exactly one vertex and exactly one edge.
			\item Every edge contains exactly two such line segments.
		\end{enumerate}
	\end{definition}
	
	Let \( G \) be a ribbon graph. A \emph{ribbon subgraph} of \( G \) is a ribbon graph obtained by removing vertices and edges from \( G \). A \emph{spanning ribbon subgraph} of \( G \) is a ribbon graph obtained by removing only edges from \( G \). A \emph{quasi-tree} \( Q \) is a connected ribbon graph with exactly one boundary component. If \( G \) is a connected ribbon graph, a \emph{spanning quasi-tree} \( Q \) of \( G \) is a spanning ribbon subgraph with exactly one boundary component. For disconnected graphs, we abuse notation by saying that \( Q \) is a spanning quasi-tree of \( G \) if \( k(Q) = k(G) \) (where \( k(G) \) denotes the number of connected components of \( G \)) and the connected components of \( Q \) are spanning quasi-trees of the connected components of \( G \).
	
	\begin{definition}[\cite{Chun2019}]
		\normalfont
		Let \( G = (V, E) \) be a ribbon graph with \( |E| = n \), and let \( o: E \rightarrow [n] \) be a bijection that induces an ordering on the edges. We define the set
		\[
		\mathcal{F}(G) := \left\{ F \subseteq [n] \mid o^{-1}(F) \text{ is the edge set of a spanning quasi-tree of } G \right\}.
		\]
		The pair \( D_o(G) := ([n], \mathcal{F}(G)) \) is called the \emph{delta-matroid of \( G \)}.
	\end{definition}
	
	A delta-matroid is said to be \emph{ribbon-graphic} if it is isomorphic to the delta-matroid of some ribbon graph. Note that Brijder and Hoogeboom $\cite{Brijder2013}$ showed that ribbon-graphic delta-matroids form a class of vf-safe delta-matroids.
	
	Let \( F = (V, E) \) be a 4-regular graph. A \emph{transition} \( \zeta_v \) at a vertex \( v \in V \) of \( F \) is a partition of the half-edges incident to \( v \) into two pairs. A \emph{transition system} \( T := \{ \zeta_v \mid v \in V \} \) of \( F \) is a choice of transition at each vertex of \( F \). Given a transition system \( T \) of \( F \), we obtain a 2-regular graph \( F|_T \) by \emph{splitting} each transition \( \zeta_v = \{b_1, b_2\} \) for \( v \in V \): specifically, we delete the vertex \( v \) and replace it with two new vertices \( v' \) and \( v'' \), where \( v' \) is incident to the two half-edges in \( b_1 \) and \( v'' \) is incident to the two half-edges in \( b_2 \).
	
	Since \( F \) is 4-regular, there are three distinct transitions at each vertex. Let \( U \) denote the set of all transitions of \( F \), and let \( \Omega \) be the partition of \( U \) where each skew class consists of the three transitions at the same vertex. Then \( (U, \Omega) \) forms a 3-carrier. A transition system \( T \) of \( F \) is precisely a transversal of \( \Omega \); recall that the set of all transversals of \( \Omega \) is denoted by \( \mathcal{T}(\Omega) \). Let
	\[
	\mathcal{B}(F) = \{ T \in \mathcal{T}(\Omega) \mid k(F|_T) = k(F) \},
	\]
	and define \( Z(F) := (U, \Omega, \mathcal{B}^*(F)) \). Bouchet showed in \cite{Bouchet2001} that the above construction yields a tight 3-matroid \( Z(F) \).
	
	We now construct a 4-regular graph, known as a \emph{medial graph}, starting from a ribbon graph. Let \( G \) be a ribbon graph. The \emph{medial graph} of \( G \), denoted by \( G_m \), is constructed by placing a vertex on each edge of \( G \) and then drawing the edges of \( G_m \) along the face boundaries of \( G \). For an isolated vertex, the medial graph is defined as a free loop, that is, a circular edge with no incident vertices.
	
	\begin{figure}
		\centering
		\includegraphics[width=0.9\linewidth]{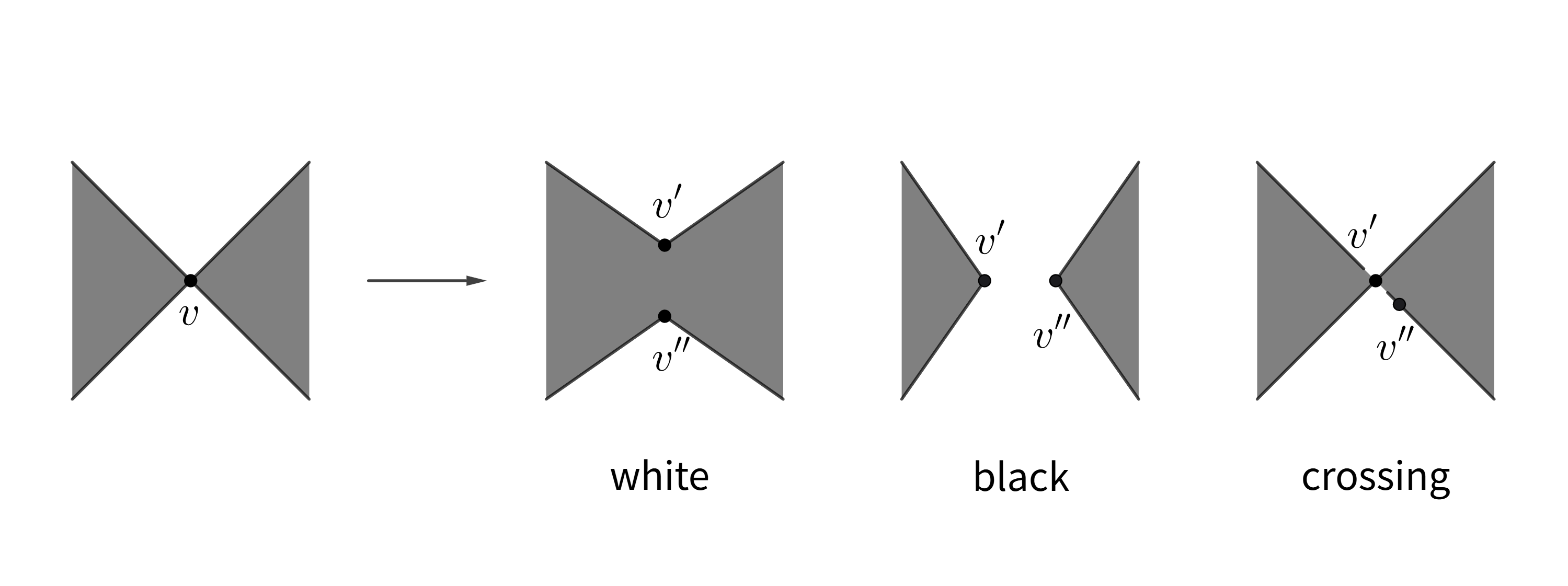}
		\caption{A vertex of \( G_m \) and its three vertex transitions}
		\label{fig:transition}
	\end{figure}
	
	Furthermore, any medial graph \( G_m \) is 4-regular and admits a \emph{checkerboard coloring}: color the faces of \( G_m \) that contain a vertex of the original graph \( G \) black, and the remaining faces white. We classify vertex transitions of \( G_m \) as follows: a transition is \emph{black} if it pairs half-edges sharing a black face, \emph{white} if it pairs half-edges sharing a white face, and \emph{crossing} otherwise, as illustrated in Figure~\ref{fig:transition}. Let \( T_b \) be the transition system consisting of all black transitions, \( T_w \) the transition system consisting of all white transitions, and \( T_c \) the transition system consisting of all crossing transitions.
	
	Let \( U \) denote the set of all transitions of \( G_m \), and let \( \Omega \) be the partition of \( U \) where each skew class consists of the three transitions associated with the same vertex of \( G_m \). Then \( (T_b, T_w, T_c) \) is a transversal 3-tuple of \( (U, \Omega) \), denoted by \( \tau_c \).
	
	\begin{theorem}
		Let \( G = (V, E) \) be a ribbon graph with \( |E| = n \), \( G_m \) its medial graph, and \( o: E \rightarrow [n] \) a bijection. Then
		\[
		\operatorname{Orb}(D_o(G)) = \left\{ D_{Z(G_m), \tau', \sigma'} \mid  \tau' \in \mathbb{T}_{(U,\Omega)},\ \sigma' \in \mathbb{P}_{(U,\Omega)} \right\}.
		\]
	\end{theorem}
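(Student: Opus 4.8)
The plan is to deduce the statement from Theorem~\ref{main} once we identify $Z(G_m)$ as the lift of $D_o(G)$ for a well-chosen transversal $3$-tuple and projection. First I would record that the carrier $(U,\Omega)$ associated with $G_m$ is an $(n,3)$-carrier: the medial graph $G_m$ has exactly $n$ vertices, one placed on each edge of $G$, so $|\Omega| = n$, and each skew class is the set of three transitions at a common vertex, so $|\omega| = 3$ for every $\omega \in \Omega$. Since $D_o(G)$ is vf-safe \cite{Brijder2013}, Theorem~\ref{main} applies with this carrier and \emph{any} transversal $3$-tuple $\tau$ and projection $\sigma$ of $(U,\Omega)$, yielding
\[
\operatorname{Orb}(D_o(G)) = \{ D_{Z_{D_o(G),\tau,\sigma},\tau',\sigma'} \mid \tau' \in \mathbb{T}_{(U,\Omega)},\ \sigma' \in \mathbb{P}_{(U,\Omega)} \}.
\]
Thus it is enough to produce a single pair $(\tau,\sigma)$ with $Z_{D_o(G),\tau,\sigma} = Z(G_m)$.

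The candidate I would use is $\tau = \tau_c = (T_b, T_w, T_c)$ together with the projection $\sigma_o \colon U \to [n]$ sending every transition at the vertex of $G_m$ lying on an edge $e \in E(G)$ to $o(e)$; this is a legitimate projection because the vertices of $G_m$ are in bijection with $E(G)$, $o$ is a bijection onto $[n]$, and the skew classes of $\Omega$ are precisely the sets of transitions sharing a vertex. Using the fact, recorded just before Lemma~\ref{inverse} and due to Brijder and Hoogeboom \cite{Brijder2014}, that $D \mapsto Z_{D,\tau,\sigma}$ and $Z \mapsto D_{Z,\tau,\sigma}$ are mutually inverse, establishing $Z_{D_o(G),\tau_c,\sigma_o} = Z(G_m)$ is equivalent to establishing
\[
D_{Z(G_m),\tau_c,\sigma_o} = D_o(G).
\]

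To prove this base-case identity I would unwind the construction of $D_{Z(G_m),\tau_c,\sigma_o}$ via equation (\ref{restrict}): it equals $([n],\mathcal{F})$ with $\mathcal{F} = \{\sigma_o(X \cap T_w) \mid X \in \mathcal{B}_{Z(G_m)},\ X \subseteq T_b \cup T_w\}$. A transversal $X$ contained in $T_b \cup T_w$ is exactly the choice of a subset $A \subseteq E(G)$ — the set of edges whose vertex carries the white transition of $X$ — and then $\sigma_o(X \cap T_w) = o(A)$, while $X \in \mathcal{B}_{Z(G_m)}$ means $k(G_m|_X) = k(G_m)$. What remains is the classical geometric statement (going back to Bouchet \cite{Bouchet2001}, and underlying the medial-graph picture of partial duality used in \cite{Chun2019}) that, under the checkerboard convention, $A$ is the edge set of a spanning quasi-tree of $G$ if and only if the transition system taking the white transition on $A$ and the black transition on $E(G) \setminus A$ yields a $2$-regular graph with $k(G_m)$ components. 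Granting this, $\mathcal{F}$ coincides with $\mathcal{F}(G)$, so $D_{Z(G_m),\tau_c,\sigma_o} = D_o(G)$, and combining with the previous paragraph and Theorem~\ref{main} finishes the proof. (If the checkerboard convention in use instead pairs spanning quasi-trees with the black side, replace $\tau_c$ by $(T_w, T_b, T_c)$; the argument is otherwise identical.)

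The step I expect to be the main obstacle is precisely the geometric correspondence in the previous paragraph: matching the spanning quasi-trees of $G$ with the base transition systems of $Z(G_m)$ restricted to black and white transitions. This is where the topology of embedded graphs enters, through the relation between the boundary components of a spanning ribbon subgraph and the connected components of the corresponding (partial-dual) medial graph. By contrast, the bookkeeping around Theorem~\ref{main} — verifying that the carrier is an $(n,3)$-carrier, that $\sigma_o$ is a projection, and converting between the lift and its inverse — is routine.
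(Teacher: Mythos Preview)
Your proof is correct and follows essentially the same route as the paper: reduce to Theorem~\ref{main} by taking $\tau = \tau_c = (T_b, T_w, T_c)$ and the projection induced by $o$, then identify $Z(G_m)$ with $Z_{D_o(G),\tau_c,\sigma}$ via the spanning-quasi-tree / transition-system correspondence on $T_b \cup T_w$. The one difference is how the equality of tight $3$-matroids is upgraded from agreement on $T_b \cup T_w$ to all of $U$: the paper does this directly, by an induction on $|B \cap T_c|$ exploiting tightness of both $Z(G_m)$ and $Z_{D_o(G),\tau_c,\sigma}$, whereas you recast the goal as $D_{Z(G_m),\tau_c,\sigma_o} = D_o(G)$ and then invoke the \emph{other} direction $Z = Z_{D_{Z,\tau,\sigma},\tau,\sigma}$ of the Brijder--Hoogeboom bijection. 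Your shortcut is legitimate---the paper's induction is essentially a special case of that direction---but be careful to cite the full inverse (both compositions equal the identity), not just Lemma~\ref{inverse}, which records only the $D = D_{Z_D}$ half.
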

	
	\begin{proof}
		Let \( \tau_c = (T_b, T_w, T_c) \) be the transversal 3-tuple defined as above, and let \( \sigma: U \rightarrow [n] \) be a projection of \( (U, \Omega) \) such that for any edge \( e \) in \( G \) and the corresponding vertex \( v_e \) in \( G_m \), \( o(e) = \sigma(\zeta_{v_e}) \). By Theorem~\ref{main}, it suffices to show that \[ Z(G_m) = Z_{D_o(G),\tau_c,\sigma}. \]
		
		For any transversal \( B \subseteq T_b \cup T_w \), the set \( B \cap T_w \) corresponds to transitions at some vertices \( V_B \) of \( G_m \), and these vertices \( V_B \) correspond to some edges \( E_B \) of \( G \). It can be seen that \( k(G_m|_B) = k(G_m) \) if and only if \( E_B \) is the edge set of a spanning quasi-tree of \( G \). Thus, we have
		\[
		\mathcal{B}_{Z(G_m)[T_b \cup T_w]} = \left\{ B \mid \sigma(B \cap T_w) = o(E_B) \in \mathcal{F}(D_o(G)) \right\} = \mathcal{B}_{Z_{D_o(G),\tau_c,\sigma}[T_b \cup T_w]}.
		\]
		
		Since \( Z(G_m) \) and \( Z_{D_o(G),\tau_c,\sigma} \) are tight, we prove by induction on \( |B \cap T_c| \) that
		\[
		\{ B \in \mathcal{B}_{Z(G_m)} \mid |B \cap T_c| = k \} = \{ B \in \mathcal{B}_{D_o(G),\tau_c,\sigma} \mid |B \cap T_c| = k \}.
		\]
		
		For the base case \( |B \cap T_c| = 0 \), we have \( \mathcal{B}_{Z(G_m)[T_b \cup T_w]} = \mathcal{B}_{Z_{D_o(G),\tau_c,\sigma}[T_b \cup T_w]} \). Assume the assertion holds for \( |B \cap T_c| = k-1 \). Let \( B \in \{ B \in \mathcal{B}_{Z(G_m)} \mid |B \cap T_c| = k \} \) and let \( x \in B \cap T_c \) belong to the skew class \( \omega \in \Omega \). Since \( Z(G_m) \) is tight, there is exactly one \( y \in \omega \) such that \( B \triangle \{x, y\} \notin \mathcal{B}_{Z(G_m)} \), while there exists a \( z \in \omega \) with \( y \neq z \) and \( z \neq x \) such that \( B \triangle \{x, z\} \in \mathcal{B}_{Z(G_m)} \). By the induction hypothesis,
		\[
		\{ B \in \mathcal{B}_{Z(G_m)} \mid |B \cap T_c| = k-1 \} = \{ B \in \mathcal{B}_{D_o(G),\tau_c,\sigma} \mid |B \cap T_c| = k-1 \}.
		\]
		Additionally, \( |(B \triangle \{x, y\}) \cap T_c| = k-1 \) and \( |(B \triangle \{x, z\}) \cap T_c| = k-1 \). It therefore follows that \( B \triangle \{x, y\} \notin \mathcal{B}_{D_o(G),\tau_c,\sigma} \) and \( B \triangle \{x, z\} \in \mathcal{B}_{D_o(G),\tau_c,\sigma} \). Since \( Z_{D_o(G),\tau_c,\sigma} \) is tight, it follows that \( B \in \mathcal{B}_{D_o(G),\tau_c,\sigma} \). Therefore,
		\[
		\{ B \in \mathcal{B}_{Z(G_m)} \mid |B \cap T_c| = k \} \subseteq \{ B \in \mathcal{B}_{D_o(G),\tau_c,\sigma} \mid |B \cap T_c| = k \},
		\]
		and the reverse inclusion follows similarly.
		
		Hence, \( Z(G_m) = Z_{D_o(G),\tau_c,\sigma} \), and the conclusion holds.
	\end{proof}
	
	\section*{Acknowledgements}
	This work is supported by NSFC (Nos. 12571379, 12471326).
	
	%\section*{References}
	
\end{document}